\documentclass[11pt]{amsart}
\usepackage[T1]{fontenc}
\usepackage[utf8]{inputenc}
\usepackage{amsmath,amssymb,amsthm,amsxtra,mathtools}
\usepackage[varg]{txfonts}
\usepackage{microtype}
\usepackage[dvipsnames,svgnames,table]{xcolor}
\usepackage{aliascnt}
\usepackage[linktocpage=true,colorlinks=true,linkcolor=Blue,
  citecolor=BrickRed,urlcolor=RoyalBlue]{hyperref}
\usepackage[nameinlink,noabbrev]{cleveref}

\hypersetup{
  pdfauthor={Aelson Sobral and Eduardo V. Teixeira},
  pdftitle={From qualitative smoothness to uniform estimates for fully nonlinear elliptic equations}
}

\numberwithin{equation}{section}

\theoremstyle{plain}

\newtheorem{theorem}{Theorem}[section]

\newaliascnt{proposition}{theorem}
\newtheorem{proposition}[proposition]{Proposition}
\aliascntresetthe{proposition}

\newaliascnt{lemma}{theorem}
\newtheorem{lemma}[lemma]{Lemma}
\aliascntresetthe{lemma}

\newaliascnt{corollary}{theorem}
\newtheorem{corollary}[corollary]{Corollary}
\aliascntresetthe{corollary}

\theoremstyle{definition}

\newaliascnt{definition}{theorem}
\newtheorem{definition}[definition]{Definition}
\aliascntresetthe{definition}

\theoremstyle{remark}

\newaliascnt{remark}{theorem}
\newtheorem{remark}[remark]{Remark}
\aliascntresetthe{remark}

\newcommand{\R}{\mathbb R}
\newcommand{\Rn}{\mathbb R^n}
\newcommand{\Sn}{\mathcal S^n}
\newcommand{\norm}[1]{\left\lVert #1\right\rVert}
\newcommand{\abs}[1]{\left\lvert #1\right\rvert}
\newcommand{\osc}{\operatorname*{osc}}
\newcommand{\dist}{\operatorname{dist}}
\newcommand{\Reg}{\operatorname{Reg}_2}
\newcommand{\Sing}{\operatorname{Sing}_2}
\newcommand{\Qual}{\alpha_{\mathrm{qual}}}
\newcommand{\Est}{\alpha_{\mathrm{est}}}

\title[Qualitative smoothness implies uniform estimates]
{From qualitative smoothness to uniform estimates for fully nonlinear elliptic equations}

\author[A. Sobral]{Aelson Sobral}
\address{Applied Mathematics and Computational Sciences (AMCS), Computer,
Electrical and Mathematical Sciences and Engineering Division (CEMSE),
King Abdullah University of Science and Technology (KAUST), Thuwal,
23955-6900, Kingdom of Saudi Arabia}
\email{aelson.sobral@kaust.edu.sa}

\author[E. V. Teixeira]{Eduardo V. Teixeira}
\address{Department of Mathematics, Oklahoma State University,
Stillwater, OK 74078, USA}
\email{eduardo.teixeira@okstate.edu}

\subjclass[2020]{Primary 35B65, 35J60; Secondary 35B45, 35D40}
\keywords{Fully nonlinear elliptic equations, qualitative regularity,
uniform a priori estimates, compactness, renormalization,
improvement of flatness, centered tangent equations, partial regularity}

\begin{document}

\begin{abstract}
We establish a general mechanism that turns qualitative smoothness into
uniform, \emph{quantitative} regularity estimates for fully nonlinear elliptic
equations. The central conclusion is that, for compact classes preserved by the natural rescalings of the equation, qualitative and quantitative regularity have the same critical threshold. At second order, mere
twice differentiability throughout the corresponding centered hull already
forces a uniform \(C^{2,\alpha}\)-theory for some \(\alpha>0\).  A one-sided
tangent version gives a new partial regularity criterion for a single
operator: every point at which a solution is twice differentiable is regular,
whereas the singular set has universal positive codimension.  The proof
combines compactness of the full renormalized class with a scale-adaptive
improvement of flatness, converting information available separately at each
profile into estimates uniform across the entire class.  The argument
requires no quantitative control of the assumed smoothness and opens a path
in problems where smoothness is visible, but estimates remain out of reach.
\end{abstract}

\maketitle
\section{Introduction}

The point of departure is an elementary observation from functional
analysis. Let $L_Au=\operatorname{tr}(AD^2u)$, where
$\lambda I\leq A\leq\Lambda I$, and consider the bounded solutions of
$L_Au=0$ in $B_1$, endowed with the supremum norm. This is a Banach space. If
every such solution belongs to $C^{1,\gamma}_{\mathrm{loc}}(B_1)$, the closed
graph theorem applied to $u\mapsto Du$ gives
\begin{equation}
 \|Du\|_{L^\infty(B_{1/2})}
 +[Du]_{C^{0,\gamma}(B_{1/2})}
 \le C_A \|u\|_{L^\infty(B_1)}.
 \label{eq:intro-linear-closed-graph}
\end{equation}
Thus, in the linear setting, mere membership in a smoothness class produces
an a priori estimate of Schauder type.

For a fully nonlinear equation there is no vector space of solutions, and
normalizing a solution usually changes the operator. The question is then:

\begin{quote}
\emph{When does qualitative smoothness of solutions yield a uniform,
scale-invariant estimate?}
\end{quote}

Here $\Sn$ denotes the space of real symmetric $n\times n$ matrices. Fix
$n\ge2$ and $0<\lambda\leq\Lambda$. We denote by
\begin{equation}
\begin{aligned}
\mathcal E_{n,\lambda,\Lambda}\coloneqq\bigl\{G\in C(\Sn):\;&G(0)=0,\\[-2mm]
&\lambda\operatorname{tr}N
 \le G(M+N)-G(M)
 \le\Lambda\operatorname{tr}N\\[-1mm]
&\hspace{42mm}\text{for all }M\in\Sn,\ N\ge0\bigr\}.
\end{aligned}
\label{eq:intro-ellipticity}
\end{equation}
the class of normalized uniformly elliptic operators. We equip it with
locally uniform convergence; with this topology it is compact. If
$G\in\mathcal E_{n,\lambda,\Lambda}$ and $t>0$, set
\begin{equation}
 (T_tG)(M)\coloneqq tG(t^{-1}M).
 \label{eq:intro-scaling-action}
\end{equation}
This action records exactly what happens under normalization. Positive one-homogeneous operators are precisely the fixed points of this action.

This action is present in several asymptotic regularity theories.
The recession regime $t\downarrow0$ appears in
\cite{SilvestreTeixeira2015,PimentelTeixeira2016}. Savin's small-perturbation
theorem \cite{Savin2007} as well as \cite{DosPrazeresTeixeira2016} work at the opposite endpoint
$t\uparrow\infty$.
The action also arises naturally in singularly perturbed free boundary
models from combustion theory,
$F_\varepsilon=T_\varepsilon F$, see \cite{RicarteTeixeira2011} and also \cite{NST}. 

Those theories exploit a favorable limiting operator. Our viewpoint is different. We retain the entire orbit and ask only for qualitative smoothness. For a single operator $F$, define its \emph{compact scaling hull} by
\begin{equation}
 \mathcal H(F)\coloneqq
 \overline{\{T_tF:t>0\}}^{\,C_{\mathrm{loc}}(\Sn)}.
 \label{eq:intro-scaling-hull}
\end{equation}
Intermediate scales cannot in general be discarded: an iteration may move
through the whole hull before it sees either endpoint. The gain is that no
estimate is assumed for any member of the hull. Only qualitative smoothness
is required, one equation at a time.

The first theorem makes this precise. Let
$\mathfrak F\subset\mathcal E_{n,\lambda,\Lambda}$ be compact and satisfy
$T_t\mathfrak F=\mathfrak F$ for every $t>0$. If every local solution of
every equation in $\mathfrak F$ belongs to $C^{1,\gamma}_{\mathrm{loc}}$,
then, for every $0<\beta<\gamma$,
\begin{equation}
 R\|Du\|_{L^\infty(B_{R/2}(x_0))}
 +R^{1+\beta}[Du]_{C^{0,\beta}(B_{R/2}(x_0))}
 \le C_{\beta,\mathfrak F}\osc_{B_R(x_0)}u.
 \label{eq:family-estimate}
\end{equation}
No $C^{1,\gamma}$ norm occurs in the hypothesis. Heuristically, the mechanism is simple.
For a normalized profile $w$, qualitative $C^{1,\gamma}$ regularity gives
\[
 \|w-\ell_w\|_{L^\infty(B_r)}
 \le L_wr^{1+\gamma}
 =r^{1+\beta}\bigl(L_wr^{\gamma-\beta}\bigr).
\]
The constant $L_w$ may depend arbitrarily on $w$, but the factor in
parentheses is small at some radius. Compactness turns these profile-dependent
radii into a finite range of choices, and scaling invariance permits
iteration. This also identifies the critical threshold exactly, see Corollary \ref{cor:critical-exponents}. Thus the exponent seen at the level of individual solutions is the same as
the exponent controlled uniformly by the equation.

At second order, subtracting a quadratic polynomial translates the operator
in matrix space. The relevant action is
\begin{equation}
 (\mathcal S_{A,t}G)(M)
 \coloneqq t\bigl[G(A+t^{-1}M)-G(A)\bigr].
 \label{eq:intro-centered-action}
\end{equation}
If a compact family is closed under this action along the zero level
$G(A)=0$, and all of its solutions are twice differentiable at every point,
then the family admits a uniform $C^{2,\alpha_*}$ estimate for some
$\alpha_*>0$. No modulus for the quadratic remainder is assumed. If the
qualitative information is $C^{2,\gamma}_{\mathrm{loc}}$, one obtains every
exponent $\beta<\gamma$. In particular, everywhere twice differentiability
for the full class $\mathcal E_{n,\lambda,\Lambda}$ is equivalent to a
universal $C^{2,\alpha}$ theory.

For a single operator, less information is enough. At a point where $u$ is
twice differentiable, let $P$ be its quadratic Taylor polynomial and set
$a(r)=\|u-P\|_{L^\infty(B_r(x_0))}=o(r^2)$. Normalizing this error produces the
parameter $r^2/a(r)\to\infty$. This singles out the one-sided centered tangent
hull
\begin{equation}
 \mathfrak T^{(2)}_\infty(F)
 \coloneqq\left\{H:\ \mathcal S_{A_j,t_j}F\longrightarrow H,\quad
 F(A_j)=0,\quad t_j\longrightarrow\infty\right\}.
 \label{eq:intro-tangent-hull}
\end{equation}
If solutions of these tangent equations are twice differentiable everywhere,
then every point of twice differentiability of an $F$-solution is a regular
point. More precisely, for some $\alpha_{\mathrm{tan}}>0$,
\begin{equation}
 \Reg(u)=\{x:u\text{ is twice differentiable at }x\}
 =\{x:u\in C^2\text{ in a neighborhood of }x\},
 \label{eq:intro-regular-set}
\end{equation}
and $u\in C^{2,\beta}_{\mathrm{loc}}(\Reg(u))$ for every
$\beta<\alpha_{\mathrm{tan}}$. The singular set is relatively closed and has
locally finite $\mathcal H^{n-\varepsilon}$ measure for a universal
$\varepsilon>0$. Here compactness first produces a uniform flatness threshold;
the $W^{3,\varepsilon}$ distribution estimate then controls the exceptional
set \cite[Lemma~5.2]{ArmstrongSilvestreSmart2012}. The formulation is intrinsic
to the zero equation. In particular, it covers tangent operators whose zero
equations are linear without requiring any rate of convergence to them.

The present work is devoted to the homogeneous equation $F=0$. As is customary in the regularity theory of nonlinear PDEs, once interior a priori regularity is established for this prototype setting, corresponding results for nonhomogeneous equations can be derived through compactness arguments of the type introduced in \cite{Caff89}; see also \cite{Teix14}.

The last section gives a concrete use of this criterion. We consider
operators with finitely many switching levels and show that their centered
tangents are linear or have a single convex or concave hinge. We then
construct a genuinely nonsmooth Isaacs operator in dimension three. It is
neither convex nor concave and has no $C^1$ uniformly elliptic representative
with the same zero equation, yet all of its centered tangent equations are
convex or concave. The resulting partial regularity theorem is therefore not
available from the usual differentiability or convexity assumptions.

The paper is organized as follows. Sections~\ref{sec:framework}--\ref{sec:proof-and-consequences} contain the
compactness and affine-improvement argument. Centered renormalization is
developed in Sections~\ref{sec:second-order-self-improvement} and
\ref{sec:partial-regularity}; the switching application is given in
Section~\ref{sec:switching-equations}.

\section{Mathematical framework}\label{sec:framework}

We write \(B_r(x_0)\) for the open ball of radius \(r\) centered at \(x_0\), and \(B_r\coloneqq B_r(0)\).  For a bounded function \(u\) on a set \(E\), set
\[
  \osc_Eu
  \coloneqq
  \sup_Eu-\inf_Eu.
\]
For \(0<\beta\leq1\), set
\[
  [g]_{C^{0,\beta}(E)}
  \coloneqq
  \sup_{\substack{x,y\in E\\x\neq y}}
  \frac{\abs{g(x)-g(y)}}{\abs{x-y}^{\beta}}.
\]
When \(\beta=1\), this is the Lipschitz seminorm. All solutions are understood in the viscosity sense.
We use the Frobenius norm
\[
 |M|\coloneqq\bigl(\operatorname{tr}(M^2)\bigr)^{1/2},
 \qquad M\in\Sn.
\]
For an operator \(G\) and an open set \(\Omega\), we denote by
\(\mathcal H_G(\Omega)\) the set of all $C$-viscosity solutions of
\(G(D^2u)=0\) in \(\Omega\).

\subsection{The operator space and its scaling action}

\begin{lemma}\label{lem:operator-compactness}
The space \(\mathcal E_{n,\lambda,\Lambda}\) is compact in the topology of locally uniform convergence.  Moreover, \(T_t\) is a homeomorphism of \(\mathcal E_{n,\lambda,\Lambda}\) with inverse \(T_{1/t}\), and
\[
  T_s(T_tG)=T_{st}G
\]
for every \(G\in\mathcal E_{n,\lambda,\Lambda}\) and \(s,t>0\).
\end{lemma}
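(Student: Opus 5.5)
We will prove compactness with Arzelà--Ascoli, and the statements about \(T_t\) by direct computation.

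\emph{Compactness.} First, every \(G\in\mathcal E_{n,\lambda,\Lambda}\) is uniformly Lipschitz. Write a difference of matrices as \(N=N^+-N^-\), with \(N^\pm\ge0\) and \(|N^\pm|\le|N|\). Applying the ellipticity inequality to each part gives
\[
 \lambda\operatorname{tr}N^+-\Lambda\operatorname{tr}N^-\le G(M+N)-G(M)\le \Lambda\operatorname{tr}N^+-\lambda\operatorname{tr}N^- .
\]
Since \(\operatorname{tr}N^\pm\le\sqrt n\,|N|\), we get \(|G(M+N)-G(M)|\le \Lambda\sqrt n\,|N|\). Together with \(G(0)=0\), this yields \(|G(M)|\le\Lambda\sqrt n|M|\). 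So the family is equi-Lipschitz and uniformly bounded on every compact subset of \(\Sn\). Arzelà--Ascoli, combined with a diagonal argument over the balls \(\{|M|\le k\}\), shows that every sequence has a locally uniformly convergent subsequence. The limit satisfies \(G(0)=0\) and the two-sided inequality, since both are pointwise closed conditions. Hence \(\mathcal E_{n,\lambda,\Lambda}\) is sequentially compact. This suffices, because the topology of locally uniform convergence on \(C(\Sn)\) is metrizable: \(\Sn\) is \(\sigma\)-compact, so one can use \(d(G,H)=\sum_k 2^{-k}\min\{1,\sup_{|M|\le k}|G-H|\}\).

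\emph{The scaling action.} Let \(G\in\mathcal E_{n,\lambda,\Lambda}\) and \(t>0\). Then \(T_tG(0)=tG(0)=0\). Moreover,
\[
T_tG(M+N)-T_tG(M)=t\bigl[G(t^{-1}M+t^{-1}N)-G(t^{-1}M)\bigr],
\]
which lies between \(t\lambda\operatorname{tr}(t^{-1}N)=\lambda\operatorname{tr}N\) and \(\Lambda\operatorname{tr}N\), because \(t^{-1}N\ge0\). So \(T_t\) maps the class into itself. The group law is immediate: \(T_s(T_tG)(M)=s\,t\,G(t^{-1}s^{-1}M)=T_{st}G(M)\). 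Taking \(s=1/t\) gives \(T_{1/t}T_t=T_1=\mathrm{id}\), and symmetrically \(T_tT_{1/t}=\mathrm{id}\), so \(T_t\) is a bijection with inverse \(T_{1/t}\).

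\emph{Continuity.} For continuity of \(T_t\), note that \(\sup_{|M|\le k}|T_tG-T_tH|=t\sup_{|M'|\le k/t}|G-H|\). Thus locally uniform convergence \(G_j\to G\) implies \(T_tG_j\to T_tG\) locally uniformly. The same argument applied to \(T_{1/t}\) shows the inverse is continuous, so \(T_t\) is a homeomorphism.

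The only step that needs any care is the Lipschitz bound from the one-sided ellipticity, via the decomposition \(N=N^+-N^-\); everything else is routine.
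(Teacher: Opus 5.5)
Your proof is correct and follows the same route as the paper: the Lipschitz bound from splitting a matrix into positive and negative parts, Arzel\`a--Ascoli with a diagonal argument plus metrizability for compactness, and direct computation for the scaling identities and the continuity of \(T_t\). You supply details the paper leaves implicit: the explicit constant \(\Lambda\sqrt n\), the metric, and the identity \(\sup_{|M|\le k}|T_tG-T_tH|=t\sup_{|M'|\le k/t}|G-H|\).
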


\begin{proof}
The ellipticity inequalities and the normalization \(G(0)=0\) imply
\[
  \abs{G(M)-G(N)}
  \leq
  C(n,\lambda,\Lambda)\abs{M-N} \quad \text{and} \quad \abs{G(M)}
  \leq
  C(n,\lambda,\Lambda)\abs{M}
\]
for every \(G\in\mathcal E_{n,\lambda,\Lambda}\).  Indeed, decompose \(M-N\) into its positive and negative parts and apply \eqref{eq:intro-ellipticity} twice. Arzel\`a--Ascoli on bounded subsets of the finite-dimensional space \(\Sn\), followed by a diagonal argument, gives sequential compactness. The locally uniform topology is
metrizable, and both normalization and ellipticity pass to locally uniform limits.  Hence \(\mathcal E_{n,\lambda,\Lambda}\) is compact.

For \(P\geq0\),
\[
  (T_tG)(M+P)-(T_tG)(M)
  =
  t\left[
    G(t^{-1}M+t^{-1}P)-G(t^{-1}M)
  \right],
\]
so \(T_tG\) has the same ellipticity constants and is normalized at the origin. The identities \(T_sT_t=T_{st}\) and \((T_t)^{-1}=T_{1/t}\) follow directly from \eqref{eq:intro-scaling-action}. Local uniform continuity of \(T_t\) and its inverse is immediate.
\end{proof}

\begin{lemma}\label{lem:solution-rescaling}
Let \(a,r>0\), \(G\in\mathcal E_{n,\lambda,\Lambda}\), and
\(u\in\mathcal H_G(B_r(x_0))\). If \(\ell\) is affine, then
\[
  v(x)
  \coloneqq
  \frac{u(x_0+rx)-\ell(x_0+rx)}{a} \quad \text{solves} \quad (T_{r^2/a}G)(D^2v)=0
\]
wherever it is defined.
\end{lemma}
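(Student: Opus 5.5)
The plan is to verify the claim first for smooth functions by a direct computation, and then transfer it to viscosity solutions by testing with the corresponding rescaled test functions. Define $\Phi(x)\coloneqq x_0+rx$, so $v=(u-\ell)\circ\Phi/a$ on $\Phi^{-1}(B_r(x_0))=B_1$. Since $\ell$ is affine, $D^2\ell=0$. Hence, for $C^2$ functions, $D^2v(x)=\frac{r^2}{a}D^2u(\Phi(x))$, and therefore
\[
 (T_{r^2/a}G)(D^2v(x))=\frac{r^2}{a}\,G\Bigl(\frac{a}{r^2}\,D^2v(x)\Bigr)=\frac{r^2}{a}\,G\bigl(D^2u(\Phi(x))\bigr).
\]
The positive factor $r^2/a$ does not affect the sign, which is the whole point.

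For the viscosity statement, check the subsolution property; the supersolution case is symmetric. Let $\varphi\in C^2$ touch $v$ from above at $\bar x\in B_1$. Define
\[
 \psi(y)\coloneqq a\,\varphi\bigl(\Phi^{-1}(y)\bigr)+\ell(y).
\]
Then $\psi\in C^2$, and $u-\psi=a\,(v-\varphi)\circ\Phi^{-1}$. Since $a>0$, $\psi$ touches $u$ from above at $\bar y=\Phi(\bar x)$, in the same local sense (local maximum of $u-\psi$, possibly with equality). Moreover, $D^2\psi(\bar y)=\frac{a}{r^2}D^2\varphi(\bar x)$. Because $u$ is a viscosity subsolution, $G(D^2\psi(\bar y))\ge0$, that is, $G\bigl(\frac{a}{r^2}D^2\varphi(\bar x)\bigr)\ge0$. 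Multiplying by $r^2/a>0$ gives $(T_{r^2/a}G)(D^2\varphi(\bar x))\ge0$.

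The correspondence $\varphi\leftrightarrow\psi$ is a bijection between test functions at $\bar x$ and at $\bar y$, and continuity is preserved under composition with the affine map, so no generality is lost. By Lemma~\ref{lem:operator-compactness}, $T_{r^2/a}G$ again lies in $\mathcal E_{n,\lambda,\Lambda}$, so $v$ is a $C$-viscosity solution of an admissible equation. The phrase ``wherever it is defined'' is handled by the same argument on any subdomain: if $u$ is a solution in an open set $\Omega$, then $v$ is a solution in $\Phi^{-1}(\Omega)$.

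There is no real obstacle here. The only care needed is the sign bookkeeping: $a>0$ keeps ``touching from above'' as touching from above, and the factor $t=r^2/a>0$ in $T_t$ preserves the inequality direction.
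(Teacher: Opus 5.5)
Your proposal is correct and follows the paper's own argument: the identity $D^2v=\frac{r^2}{a}D^2u(x_0+rx)$ combined with the definition of $T_{r^2/a}$, transferred to the viscosity setting through the test-function correspondence $\psi=a\,\varphi\circ\Phi^{-1}+\ell$, which the paper only sketches as ``the same calculation with touching test functions.'' Your sign convention (touching from above gives $G(D^2\psi)\ge0$) matches the paper's increasing-operator convention, and you have simply written out in full what the paper leaves implicit.
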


\begin{proof}
Since affine functions have zero Hessian,
\[
  D^2v(x)=\frac{r^2}{a}D^2u(x_0+rx).
\]
The asserted equation follows from the definition of \(T_{r^2/a}\).
The same calculation with touching test functions proves the statement in
the viscosity sense.
\end{proof}

The scaling hull $\mathcal H(F)$ defined in
\eqref{eq:intro-scaling-hull} is compact by
\Cref{lem:operator-compactness}; the identity
$T_sT_t=T_{st}$ and the continuity of $T_s$ give
\[
  T_s\mathcal H(F)=\mathcal H(F)
  \qquad(s>0).
\]

\subsection{Compactness of normalized pairs}

Let \(\mathfrak F\subset\mathcal E_{n,\lambda,\Lambda}\) be nonempty and compact. Define
\[
  \mathcal P_0(\mathfrak F)
  \coloneqq
  \left\{
    \left(G,v|_{\overline B_{1/2}}\right)
    \colon
    \begin{array}{l}
      G\in\mathfrak F,\quad
      v\in\mathcal H_G(B_1),\\
      \norm{v}_{L^\infty(B_1)}\leq1
    \end{array}
  \right\},
\]
and let
\[
  \mathcal P(\mathfrak F)
  \coloneqq
  \overline{\mathcal P_0(\mathfrak F)}^{
    \,C_{\mathrm{loc}}(\Sn)\times C(\overline B_{1/2})
  }.
\]

\begin{lemma}\label{lem:pair-compactness}
The set \(\mathcal P(\mathfrak F)\) is compact. If \((G,w)\in\mathcal P(\mathfrak F)\), then \(G\in\mathfrak F\) and
\[
  G(D^2w)=0
  \qquad\text{in }B_{1/2}.
\]
\end{lemma}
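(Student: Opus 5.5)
The plan is to show that \(\mathcal P(\mathfrak F)\) is a closed subset of a compact product, and then to identify its limit points through viscosity stability.

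\emph{Step 1 (first coordinates).} Since \(\mathfrak F\) is compact in the metrizable space \(\mathcal E_{n,\lambda,\Lambda}\) (\Cref{lem:operator-compactness}), it is closed. Every element of \(\mathcal P_0(\mathfrak F)\) has first coordinate in \(\mathfrak F\), so the closure also has first coordinates in \(\mathfrak F\). This gives the assertion \(G\in\mathfrak F\).

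\emph{Step 2 (second coordinates).} For any \(G\in\mathcal E_{n,\lambda,\Lambda}\), a solution \(v\in\mathcal H_G(B_1)\) lies in the Pucci class \(S(\lambda/n,\Lambda)\). This holds because \(G(0)=0\) and the ellipticity inequalities give
\[
\mathcal M^-(D^2v)\le G(D^2v)-G(0)\le \mathcal M^+(D^2v)
\]
in the viscosity sense, with appropriate Pucci constants. The Krylov--Safonov interior H\"older estimate then gives, for \(\norm{v}_{L^\infty(B_1)}\le1\),
\[
\norm{v}_{C^{0,\alpha_0}(\overline B_{1/2})}\le C(n,\lambda,\Lambda),
\]
with \(\alpha_0=\alpha_0(n,\lambda,\Lambda)>0\). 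The estimate is uniform over all \(G\) and needs no regularity of \(G\). Consequently the second coordinates of \(\mathcal P_0(\mathfrak F)\) form a bounded, equicontinuous family in \(C(\overline B_{1/2})\), which is precompact by Arzel\`a--Ascoli.

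\emph{Step 3 (compactness).} By Steps 1 and 2, \(\mathcal P_0(\mathfrak F)\) sits inside the product of \(\mathfrak F\) with a compact subset of \(C(\overline B_{1/2})\), which is compact and metrizable. Its closure \(\mathcal P(\mathfrak F)\) is therefore compact.

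\emph{Step 4 (the limit solves the equation).} Let \((G,w)\in\mathcal P(\mathfrak F)\), and choose \((G_k,v_k|_{\overline B_{1/2}})\in\mathcal P_0(\mathfrak F)\) converging to \((G,w)\). On \(B_{1/2}\), \(v_k\to w\) uniformly and \(G_k\to G\) locally uniformly on \(\Sn\). The standard stability theorem for viscosity solutions then shows that \(w\) is a viscosity solution of \(G(D^2w)=0\) in \(B_{1/2}\).

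To make this self-contained I would verify the stability directly. Suppose a quadratic \(\varphi\) touches \(w\) strictly from above at \(x\in B_{1/2}\). Perturbing to \(\varphi+\delta|y-x|^2\) makes the touching strict. Uniform convergence then yields points \(x_k\to x\) where \(\varphi+\delta|\cdot-x|^2+c_k\) touches \(v_k\) from above. The subsolution property of \(v_k\) gives \(G_k(D^2\varphi+2\delta I)\ge0\). Since the matrices involved stay in a bounded set, local uniform convergence of \(G_k\) lets us pass to the limit, giving \(G(D^2\varphi+2\delta I)\ge0\). Letting \(\delta\to0\) and using continuity of \(G\) yields \(G(D^2\varphi)\ge0\). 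The supersolution case is symmetric, and for \(C\)-viscosity solutions testing with quadratics (or \(C^2\) functions, via Taylor expansion) suffices.

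The only point requiring care is Step 2. The equicontinuity must be uniform in the operator, which is exactly why one passes through the Pucci class rather than using any property of an individual \(G\). Everything else is routine.
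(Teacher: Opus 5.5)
Your proposal is correct and follows the paper's own argument: an operator-independent Krylov--Safonov H\"older bound on \(\overline B_{1/2}\), then Arzel\`a--Ascoli combined with compactness (hence closedness) of \(\mathfrak F\), then stability of viscosity solutions under locally uniform convergence of the operators. Your additions are verifications of steps the paper treats as standard: membership in a Pucci class with constants depending only on \(n,\lambda,\Lambda\), and the direct stability check with quadratic test functions.
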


\begin{proof}
Applying the Krylov--Safonov interior H\"older estimate first on a slightly
larger ball, say \(B_{3/4}\), gives
\(\alpha=\alpha(n,\lambda,\Lambda)\in(0,1)\) and
\(C=C(n,\lambda,\Lambda)\) such that
\[
  [v]_{C^{0,\alpha}(\overline B_{1/2})}
  \leq C
\]
for every pair in \(\mathcal P_0(\mathfrak F)\). The first components range in the compact family \(\mathfrak F\), and the second components are uniformly bounded and equicontinuous. Arzel\`a--Ascoli therefore gives compactness of the closure.

If $G_j\to G$ locally uniformly on $\Sn$ and $v_j\to w$ uniformly on
$\overline B_{1/2}$,
then \(G\in\mathfrak F\), and stability of viscosity solutions for locally uniformly convergent operators yields
\[
  G(D^2w)=0
  \qquad\text{in }B_{1/2}.
\]
\end{proof}

\section{Compact affine improvement and iteration}
\label{sec:finite-menu}

This is where the qualitative hypothesis enters the proof.  The scale at
which one particular solution becomes flat may be very small.  Compactness
says that, among all normalized profiles, these scales remain in a controlled
range.

\begin{lemma}\label{lem:finite-menu}
Let \(0<\beta<\gamma\leq1\), and let \(\mathfrak F\subset\mathcal E_{n,\lambda,\Lambda}\) be nonempty and compact in the topology of locally uniform convergence. Assume that, for every \(G\in\mathfrak F\), every ball \(B\subset\Rn\), and every
\(u\in\mathcal H_G(B)\),
\[
  u\in C^{1,\gamma}_{\mathrm{loc}}(B).
\]

Then there is \(\theta\in(0,1/4)\) such that, for
every \(G\in\mathfrak F\) and every \(v\in\mathcal H_G(B_1)\) satisfying
\(\norm{v}_{L^\infty(B_1)}\leq1\), one can find
\(r\in[\theta,1/4]\) and an affine function
\(\ell(x)=a+b\cdot x\) for which
\begin{equation}
  \norm{v-\ell}_{L^\infty(B_r)}
  \leq\frac12r^{1+\beta}.
  \label{eq:finite-menu-improvement}
\end{equation}
\end{lemma}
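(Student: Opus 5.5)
We argue by compactness of the normalized pairs in \Cref{lem:pair-compactness}, combined with an open-cover argument. The qualitative hypothesis will be used one profile at a time, never uniformly.

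\textbf{Step 1: a good radius for each limit profile.} Fix \((G,w)\in\mathcal P(\mathfrak F)\). By \Cref{lem:pair-compactness}, \(G\in\mathfrak F\) and \(G(D^2w)=0\) in \(B_{1/2}\). The hypothesis applied on the ball \(B_{1/2}\) gives \(w\in C^{1,\gamma}_{\mathrm{loc}}(B_{1/2})\). Hence, with \(\ell_w(x)=w(0)+Dw(0)\cdot x\), there is \(L_w<\infty\) such that
\[
  \norm{w-\ell_w}_{L^\infty(B_r)}\le L_w r^{1+\gamma}
  \qquad\text{for } 0<r\le 1/4 .
\]
Since \(\beta<\gamma\), we can choose \(r_w\in(0,1/4]\) with \(L_w r_w^{\gamma-\beta}\le 1/4\). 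This gives
\[
  \norm{w-\ell_w}_{L^\infty(B_{r_w})}\le \tfrac14 r_w^{1+\beta}.
\]
The radius \(r_w\) depends on \(w\) in an uncontrolled way, and no uniformity is claimed at this stage.

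\textbf{Step 2: openness.} Consider the set
\[
  U_w\coloneqq\Bigl\{(H,z)\in\mathcal P(\mathfrak F):
  \norm{z-\ell_w}_{L^\infty(B_{r_w})}<\tfrac12 r_w^{1+\beta}\Bigr\}.
\]
It is relatively open in \(\mathcal P(\mathfrak F)\), since the second component carries the topology of \(C(\overline B_{1/2})\) and \(B_{r_w}\subset B_{1/2}\). It also contains \((G,w)\), by the strict margin \(1/4<1/2\). The sets \(U_w\) therefore cover the compact set \(\mathcal P(\mathfrak F)\), and we extract a finite subcover \(U_{w_1},\dots,U_{w_N}\). We then set
\[
  \theta\coloneqq\min\Bigl\{\tfrac18,\ \min_i r_{w_i}\Bigr\}\in(0,1/4).
\]

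\textbf{Step 3: conclusion.} Let \(G\in\mathfrak F\) and \(v\in\mathcal H_G(B_1)\) with \(\norm{v}_{L^\infty(B_1)}\le1\). Then \((G,v|_{\overline B_{1/2}})\in\mathcal P_0(\mathfrak F)\subset\mathcal P(\mathfrak F)\), so it lies in some \(U_{w_i}\). Taking \(r=r_{w_i}\in[\theta,1/4]\) and \(\ell=\ell_{w_i}\) yields \eqref{eq:finite-menu-improvement}.

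\textbf{Main obstacle.} The delicate point is Step 1, namely applying the qualitative hypothesis to limit profiles \(w\) rather than to genuine solutions on \(B_1\). This is legitimate only because \Cref{lem:pair-compactness} guarantees \(G\in\mathfrak F\) and that \(w\) is a viscosity solution on a ball. The hypothesis is stated for every ball \(B\), so it applies on \(B_{1/2}\).

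If one prefers to avoid this point, there is an alternative: cover only \(\mathcal P_0(\mathfrak F)\) by the open sets \(U_w\) with \(w\) ranging over \(\mathcal P_0(\mathfrak F)\). Its closure is still covered, because every limit point also has such a neighborhood by Step 1. So compactness of \(\mathcal P(\mathfrak F)\) is what makes the finite subcover available in either formulation. Equivalently, one can argue by contradiction with a sequence whose admissible radii tend to zero and pass to a limit pair, which reaches the same conclusion.
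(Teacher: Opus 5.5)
Your proof is correct and follows the paper's argument. It uses the same ingredients: a profile-dependent radius from the qualitative $C^{1,\gamma}$ bound and the gap $\gamma-\beta>0$, relatively open neighborhoods in the compact set $\mathcal P(\mathfrak F)$, a finite subcover, and $\theta$ the minimal radius. The only cosmetic difference is the neighborhood: you take the set where $\ell_w$ already works at radius $r_w$, while the paper uses the sup-norm ball of radius $\frac14\rho_q^{1+\beta}$ about $w$ together with a triangle inequality.

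Your closing ``alternative'' does not actually avoid applying the hypothesis to limit profiles, because covering the closure still requires neighborhoods centered at limit points. This is harmless, since \Cref{lem:pair-compactness} makes that application legitimate exactly as in your Step~1.
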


\begin{proof}
Fix \(q=(G,w)\in\mathcal P(\mathfrak F)\). By
\Cref{lem:pair-compactness}, \(w\) solves the equation for \(G\) in
\(B_{1/2}\), and the qualitative hypothesis allows us to set
\[
  \ell_q(x)
  \coloneqq
  w(0)+Dw(0)\cdot x,
\]
and hence there exists \(L_q<\infty\) such that
\[
  \norm{w-\ell_q}_{C(\overline B_r)}
  \leq
  L_qr^{1+\gamma}
  \qquad
  \left(0<r\leq\frac14\right).
\]
Because \(\gamma-\beta>0\), we may choose \(\rho_q\in(0,1/4)\) sufficiently small that
\[
  L_q\rho_q^{\gamma-\beta}<\frac14.
\]
It follows that
\begin{equation}
  \norm{w-\ell_q}_{C(\overline B_{\rho_q})}
  <
  \frac14\rho_q^{1+\beta} \eqqcolon \varepsilon_q
  \label{eq:strict-flatness}
\end{equation}
Set
\[
  \mathcal O_q
  \coloneqq
  \left\{
    (H,z)\in\mathcal P(\mathfrak F)\colon
    \norm{z-w}_{C(\overline B_{1/2})}
    <
    \varepsilon_q
  \right\}.
\]
This is a relative neighborhood of \(q\) in \(\mathcal P(\mathfrak F)\), and every
profile in it satisfies the desired inequality at the same radius and with
the same affine function.  Compactness gives a finite subcover with centers
\[
  q_i=(G_i,w_i)\in\mathcal P(\mathfrak F), \qquad 1\leq i\leq N.
\]
Set
\[
  \rho_i\coloneqq\rho_{q_i},
  \qquad
  \ell_i\coloneqq\ell_{q_i}.
\]
Every admissible pair belongs to some \(\mathcal O_{q_i}\), and hence
\[
  \begin{aligned}
  \norm{v-\ell_i}_{C(\overline B_{\rho_i})}
  &\leq
  \norm{v-w_i}_{C(\overline B_{\rho_i})}
  +
  \norm{w_i-\ell_i}_{C(\overline B_{\rho_i})}
  \\
  &\leq
  \norm{v-w_i}_{C(\overline B_{1/2})}
  +
  \norm{w_i-\ell_i}_{C(\overline B_{\rho_i})}
  \\
  &<
  \frac14\rho_i^{1+\beta}
  +
  \frac14\rho_i^{1+\beta}
  =
  \frac12\rho_i^{1+\beta}.
  \end{aligned}
\]
Set \(\theta\coloneqq\min_i\rho_i\). This proves the lemma.
\end{proof}

\begin{lemma}\label{lem:iteration}
Assume the hypotheses of \Cref{lem:finite-menu}. Let
\(G\in\mathfrak F\) and \(v\in\mathcal H_G(B_1)\) satisfy
\(\norm{v}_{L^\infty(B_1)}\leq1\). Assume, in addition, that
\[
  T_t\mathfrak F=\mathfrak F
  \qquad\text{for every }t>0.
\]
There exist
\[
  r_k\in[\theta,1/4],
  \qquad
  R_0\coloneqq1,
  \qquad
  R_{k+1}\coloneqq R_k r_k,
\]
operators \(G_k\in\mathfrak F\), and affine functions \(P_k\), with \(G_0\coloneqq G\) and \(P_0\coloneqq0\), such that
\begin{equation}
  \norm{v-P_k}_{L^\infty(B_{R_k})}
  \leq
  \frac12R_k^{1+\beta}
  \qquad(k\geq1).
  \label{eq:iterated-flatness}
\end{equation}
Moreover,
\begin{equation}
  v_k(x)
  \coloneqq
  \frac{v(R_kx)-P_k(R_kx)}{R_k^{1+\beta}}
  \label{eq:normalized-iterate}
\end{equation}
belongs to \(\mathcal H_{G_k}(B_1)\), \(\norm{v_k}_{L^\infty(B_1)}\leq1\), with \(G_k=T_{R_k^{1-\beta}}G\).
\end{lemma}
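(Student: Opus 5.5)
We argue by induction on \(k\), carrying the invariant that \(v_k\in\mathcal H_{G_k}(B_1)\), \(\norm{v_k}_{L^\infty(B_1)}\le1\) and \(G_k=T_{R_k^{1-\beta}}G\in\mathfrak F\). The base case \(k=0\) holds trivially: \(R_0=1\), \(P_0=0\), \(v_0=v\), \(G_0=T_1G=G\).

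For the inductive step, suppose \(v_k\) and \(G_k\) are given with these properties. We apply \Cref{lem:finite-menu} to the pair \((G_k,v_k)\) and obtain \(r_k\in[\theta,1/4]\) and an affine function \(\ell_k\) with
\[
  \norm{v_k-\ell_k}_{L^\infty(B_{r_k})}\le\tfrac12 r_k^{1+\beta}.
\]
We then define \(R_{k+1}=R_kr_k\) and
\[
  P_{k+1}(y)\coloneqq P_k(y)+R_k^{1+\beta}\ell_k(y/R_k),
\]
which is affine. Undoing the rescaling \(y=R_kx\) gives
\[
  \norm{v-P_{k+1}}_{L^\infty(B_{R_{k+1}})}
  =R_k^{1+\beta}\norm{v_k-\ell_k}_{L^\infty(B_{r_k})}
  \le\tfrac12R_k^{1+\beta}r_k^{1+\beta}
  =\tfrac12R_{k+1}^{1+\beta}.
\]
This is \eqref{eq:iterated-flatness} at step \(k+1\).

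The new normalized function satisfies the identity
\[
  v_{k+1}(x)=\frac{v(R_{k+1}x)-P_{k+1}(R_{k+1}x)}{R_{k+1}^{1+\beta}}
  =\frac{v_k(r_kx)-\ell_k(r_kx)}{r_k^{1+\beta}}.
\]
Since \(r_k\le 1/4\), the bound just proved gives \(\norm{v_{k+1}}_{L^\infty(B_1)}\le\frac12\le1\). For the equation we use \Cref{lem:solution-rescaling} applied to \(v_k\in\mathcal H_{G_k}(B_1)\), with \(x_0=0\), \(r=r_k\), \(a=r_k^{1+\beta}\) and the affine function \(\ell_k\). It shows that \(v_{k+1}\) solves
\[
  (T_{r_k^{1-\beta}}G_k)(D^2v_{k+1})=0 \quad\text{in } B_1,
\]
because \(B_{r_k}\subset B_1\). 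By \Cref{lem:operator-compactness},
\[
  G_{k+1}\coloneqq T_{r_k^{1-\beta}}T_{R_k^{1-\beta}}G=T_{R_{k+1}^{1-\beta}}G.
\]
The invariance \(T_t\mathfrak F=\mathfrak F\) gives \(G_{k+1}\in\mathfrak F\), which closes the induction.

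The only delicate point is the bookkeeping that keeps the renormalized operator inside \(\mathfrak F\). This is what allows \Cref{lem:finite-menu} to be reapplied with the same \(\theta\) at every step. The scaling invariance hypothesis and the group law \(T_sT_t=T_{st}\) handle it exactly. Everything else is the algebraic identity between \(v_{k+1}\) and \(v_k\), together with the affine invariance of the equation.
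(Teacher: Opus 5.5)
Your proof is correct and follows the paper's argument essentially step for step. It uses the same recursive definitions of \(P_{k+1}\), \(v_{k+1}\) and \(G_{k+1}=T_{r_k^{1-\beta}}G_k\), and closes the induction with \Cref{lem:solution-rescaling}, the group law \(T_sT_t=T_{st}\), and the invariance \(T_t\mathfrak F=\mathfrak F\). One cosmetic point: the bound \(\norm{v_{k+1}}_{L^\infty(B_1)}\le\frac12\) comes directly from the one-step flatness on \(B_{r_k}\). The condition \(r_k\le1/4\) is needed only to ensure \(B_{r_k}\subset B_1\), so that \(v_{k+1}\) is defined on \(B_1\).
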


\begin{proof}
Set \(v_0\coloneqq v\). Suppose \(v_k\) and \(G_k\) have been constructed.
\Cref{lem:finite-menu} provides \(r_k\in[\theta,1/4]\) and an affine
function \(\widehat\ell_k\) such that
\begin{equation}
  \norm{v_k-\widehat\ell_k}_{L^\infty(B_{r_k})}
  \leq
  \frac12r_k^{1+\beta}.
  \label{eq:one-step-flatness}
\end{equation}
Define
\[
\begin{aligned}
 v_{k+1}(x)
 &\coloneqq\frac{v_k(r_kx)-\widehat\ell_k(r_kx)}{r_k^{1+\beta}},\\
 P_{k+1}(y)
 &\coloneqq P_k(y)+R_k^{1+\beta}\widehat\ell_k(y/R_k),\\
 G_{k+1}&\coloneqq T_{r_k^{1-\beta}}G_k.
\end{aligned}
\]
Then \(\norm{v_{k+1}}_{L^\infty(B_1)}\leq1/2\). By
\Cref{lem:solution-rescaling}, \(v_{k+1}\) solves the equation for
\(G_{k+1}\), and the invariance hypothesis of the lemma gives
\(G_{k+1}\in\mathfrak F\). The semigroup identity gives
\[
  G_{k+1}
  =
  T_{R_{k+1}^{1-\beta}}G.
\]
Substituting \eqref{eq:normalized-iterate} into the definition of \(v_{k+1}\) shows that the same formula holds at index \(k+1\). Finally, rescaling \eqref{eq:one-step-flatness} back to \(B_{R_{k+1}}\) gives \eqref{eq:iterated-flatness}.
\end{proof}

\begin{proposition}\label{prop:pointwise-expansion}
Assume the setting of \Cref{lem:iteration}. There is
$A_{\beta,\mathfrak F}<\infty$ such that, for every $G\in\mathfrak F$ and
every $v\in\mathcal H_G(B_1)$ with
$\|v\|_{L^\infty(B_1)}\leq1$, the function $v$ is differentiable at the
origin and satisfies
\begin{equation}
  \abs{v(x)-v(0)-Dv(0)\cdot x}
  \leq
  A_{\beta,\mathfrak F}\abs{x}^{1+\beta}
  \qquad(x\in B_\theta).
  \label{eq:pointwise-expansion}
\end{equation}
Moreover,
\[
  \abs{Dv(0)}
  \leq
  A_{\beta,\mathfrak F}.
\]
\end{proposition}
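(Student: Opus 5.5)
The plan is to run the iteration of \Cref{lem:iteration} and show that the affine functions \(P_k\) converge geometrically. The limit will be the first-order Taylor polynomial of \(v\) at the origin, and the bound \eqref{eq:iterated-flatness} will then be transferred to all intermediate radii. Write \(P_k(y)=a_k+b_k\cdot y\).

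First I control the increments. Set \(P_0=0\), so that \eqref{eq:iterated-flatness} holds at \(k=0\) with constant \(1\) (since \(\|v\|_{L^\infty(B_1)}\le1\)). For \(k\ge0\) and \(R_{k+1}<R_k\), the triangle inequality gives
\[
\|P_{k+1}-P_k\|_{L^\infty(B_{R_{k+1}})}\le \|v-P_{k+1}\|_{L^\infty(B_{R_{k+1}})}+\|v-P_k\|_{L^\infty(B_{R_k})}\le R_{k+1}^{1+\beta}+R_k^{1+\beta}\le 2R_k^{1+\beta}.
\]
For an affine function \(\ell(y)=a+b\cdot y\), evaluating at \(y=0\) and at \(y=\pm sR_{k+1}e\) with \(s<1\) and \(|e|=1\) gives
\[
|a|\le\|\ell\|_{L^\infty(B_{R_{k+1}})},\qquad |b|\le 2\|\ell\|_{L^\infty(B_{R_{k+1}})}/R_{k+1}.
\]
Because \(R_{k+1}\ge\theta R_k\), this yields
\[
|a_{k+1}-a_k|\le 2R_k^{1+\beta},\qquad |b_{k+1}-b_k|\le 4\theta^{-1}R_k^{\beta}.
\]
Since \(R_k\le 4^{-k}\), both series converge geometrically. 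Hence \(a_k\to a_\infty\) and \(b_k\to b_\infty\), with tail bounds \(|a_\infty-a_k|\le CR_k^{1+\beta}\) and \(|b_\infty-b_k|\le C\theta^{-1}R_k^\beta\). Here \(C\) depends only on \(\beta\), via the sum \(\sum_j 4^{-j\beta}\) and the inequality \(R_{k+j}\le 4^{-j}R_k\). In particular \(|b_\infty|\le\sum_k|b_{k+1}-b_k|\le C\theta^{-1}\).

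Next I fill in the intermediate radii. Let \(x\in B_\theta\), \(x\ne0\), and pick \(k\ge1\) with \(R_{k+1}\le|x|<R_k\). This is possible because \(R_1\ge\theta\) and \(R_k\downarrow0\). Set \(\ell_\infty(y)=a_\infty+b_\infty\cdot y\). Then
\[
|v(x)-\ell_\infty(x)|\le \tfrac12R_k^{1+\beta}+|a_\infty-a_k|+|b_\infty-b_k|R_k\le C'R_k^{1+\beta}\le C'\theta^{-1-\beta}|x|^{1+\beta},
\]
where the last step uses \(R_k\le\theta^{-1}R_{k+1}\le\theta^{-1}|x|\). Letting \(x\to0\) gives \(v(0)=a_\infty\). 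The bound also shows \(v(x)-v(0)-b_\infty\cdot x=O(|x|^{1+\beta})\), so \(v\) is differentiable at \(0\) with \(Dv(0)=b_\infty\). This proves \eqref{eq:pointwise-expansion}, and the bound on \(|Dv(0)|\) is the estimate on \(|b_\infty|\) above. The constant \(A_{\beta,\mathfrak F}\) is the maximum of \(C'\theta^{-1-\beta}\) and \(C\theta^{-1}\). It depends only on \(\beta\) and on \(\theta\), which is fixed by \Cref{lem:finite-menu} and depends only on \(\beta\) and \(\mathfrak F\); it does not depend on \(G\) or \(v\).

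The main obstacle is that the step sizes \(r_k\) vary with \(k\), so the radii \(R_k\) do not form a fixed geometric sequence. Both the summation and the interpolation must therefore rely only on the two-sided bounds \(\theta\le r_k\le1/4\). Everything else is the standard Campanato-type telescoping argument.
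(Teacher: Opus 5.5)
Your proof is correct and follows essentially the same telescoping argument as the paper: geometric convergence of the affine coefficients of $P_k$ using only $\theta\le r_k\le 1/4$, then interpolation at $R_{k+1}\le|x|<R_k$ combined with $R_{k+1}\ge\theta R_k$. The differences are cosmetic. You bound the increments $P_{k+1}-P_k$ by comparing consecutive flatness estimates, whereas the paper uses the recursion $p_{k+1}-p_k=R_k^\beta b_k$. You also read off $|Dv(0)|$ from the telescoping sum, whereas the paper evaluates the expansion at $x=(\theta/2)e$.
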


\begin{proof}
With the notation of \Cref{lem:iteration}, write
\[
  \widehat\ell_k(x)=a_k+b_k\cdot x,
  \qquad
  P_k(x)=c_k+p_k\cdot x.
\]
Since \(\norm{v_k}_{L^\infty(B_1)}\leq1\),
\eqref{eq:one-step-flatness} implies
\[
  \norm{\widehat\ell_k}_{L^\infty(B_{r_k})}
  \leq\frac32,
  \qquad
  \abs{b_k}
  \leq
  \frac{3}{2r_k}
  \leq
  \frac{3}{2\theta}.
\]
The recursion from \Cref{lem:iteration} gives
\[
  p_{k+1}-p_k=R_k^\beta b_k.
\]
Since \(R_{k+j}\leq 4^{-j}R_k\), the sequence \((p_k)_{k \in \mathbb{N}}\) converges to
some \(p\in\Rn\), and
\[
  \abs{p-p_k}
  \leq
  C R_k^\beta.
\]
Moreover, evaluating \eqref{eq:iterated-flatness} at the origin yields
\[
  \abs{v(0)-c_k}
  \leq
  \frac12R_k^{1+\beta}.
\]
Given \(0<\abs{x}<\theta\), choose \(k\geq1\) such that
\[
  R_{k+1}\leq\abs{x}<R_k.
\]
Since \(R_k\le4^{-k}\), these radii tend to zero and such a \(k\) exists.
Using the preceding estimates and \eqref{eq:iterated-flatness}, we find
\[
\begin{aligned}
  \abs{v(x)-v(0)-p\cdot x}
  &\leq
  \|v-P_k\|_{L^\infty(B_{R_k})}
  +\abs{c_k-v(0)}+\abs{p_k-p}\,\abs{x}\\
  &\leq C R_k^{1+\beta}.
\end{aligned}
\]
Because \(R_{k+1}\geq\theta R_k\), it follows that
\[
  \abs{v(x)-v(0)-p\cdot x}
  \leq
  C\theta^{-(1+\beta)}\abs{x}^{1+\beta}.
\]
Thus \(v\) is differentiable at the origin, \(Dv(0)=p\), and
\eqref{eq:pointwise-expansion} follows.

Finally, evaluating \eqref{eq:pointwise-expansion} at
\(x=(\theta/2)e\), where \(e\) is any unit vector, and using
\(\norm{v}_{L^\infty(B_1)}\leq1\), gives a uniform bound for
\(\abs{Dv(0)}\).  Increasing \(A_{\beta,\mathfrak F}\), if necessary,
completes the proof.
\end{proof}

\section{Uniform gradient estimates}\label{sec:proof-and-consequences}

\begin{theorem}[Qualitative regularity gives uniform estimates]
\label{thm:intro-main}
Let \(0<\gamma\le1\), and let
\(\mathfrak F\subset\mathcal E_{n,\lambda,\Lambda}\) be nonempty and
compact.  Assume
\begin{equation}
 T_t\mathfrak F=\mathfrak F\qquad(t>0)
 \label{eq:family-invariance}
\end{equation}
and suppose every local solution of every equation in \(\mathfrak F\)
belongs to \(C^{1,\gamma}_{\mathrm{loc}}\).  Then, for every
\(0<\beta<\gamma\), there is \(C_{\beta,\mathfrak F}<\infty\) such that
\eqref{eq:family-estimate} holds for every \(G\in\mathfrak F\) and every
\(u\in\mathcal H_G(B_R(x_0))\).
\end{theorem}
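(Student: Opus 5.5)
The plan is to reduce \eqref{eq:family-estimate} to the normalized pointwise expansion of \Cref{prop:pointwise-expansion}, applied at every point of \(B_{R/2}(x_0)\). Then a standard comparison of pointwise expansions at two nearby points converts uniform pointwise \(C^{1,\beta}\) control into a Hölder seminorm bound for \(Du\).

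\emph{Step 1: normalization.} Let \(u\in\mathcal H_G(B_R(x_0))\). We may assume \(\osc_{B_R(x_0)}u>0\); otherwise \(u\) is constant and there is nothing to prove. Fix \(y\in B_{R/2}(x_0)\), so that \(B_{R/2}(y)\subset B_R(x_0)\). Set \(m=\tfrac12(\sup u+\inf u)\) and \(a=\tfrac12\osc_{B_R(x_0)}u\), and define
\[
v_y(x)\coloneqq\frac{u(y+\tfrac R2x)-m}{a},\qquad x\in B_1 .
\]
Then \(\|v_y\|_{L^\infty(B_1)}\le1\). By \Cref{lem:solution-rescaling} with \(\ell\equiv m\), the function \(v_y\) solves \(T_{R^2/(4a)}G=0\), and by \eqref{eq:family-invariance} this operator lies in \(\mathfrak F\). \Cref{prop:pointwise-expansion} therefore applies to \(v_y\). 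Scaling back, \(u\) is differentiable at \(y\) with
\[
\tfrac R2|Du(y)|\le A a,\qquad |u(z)-u(y)-Du(y)\cdot(z-y)|\le A a\Bigl(\tfrac{2|z-y|}{R}\Bigr)^{1+\beta}
\]
for \(|z-y|<\theta R/2\), where \(A=A_{\beta,\mathfrak F}\). The first inequality already gives the gradient term \(R\|Du\|_{L^\infty(B_{R/2}(x_0))}\le C\osc u\).

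\emph{Step 2: Hölder seminorm of \(Du\).} Take \(y_1,y_2\in B_{R/2}(x_0)\) and set \(\delta=|y_1-y_2|\). The argument splits into two cases.

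If \(\delta\ge\theta R/8\), the gradient bound gives \(|Du(y_1)-Du(y_2)|\le 4Aa/R\le C\,a\,R^{-1-\beta}\delta^{\beta}\), with \(C\) depending on \(\theta\).

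If \(\delta<\theta R/8\), let \(\rho=\delta\). Write both expansions, centered at \(y_1\) and at \(y_2\), for points \(z\in B_\rho(y_1)\); such points satisfy \(|z-y_i|\le2\delta<\theta R/2\). Subtracting the two expansions shows that the affine function
\[
z\mapsto u(y_1)-u(y_2)+Du(y_1)\cdot(z-y_1)-Du(y_2)\cdot(z-y_2)
\]
is bounded on \(B_\rho(y_1)\) by \(C a(\delta/R)^{1+\beta}\). An affine function bounded by \(K\) on a ball of radius \(\rho\) has slope at most \(K/\rho\). Hence
\[
|Du(y_1)-Du(y_2)|\le C a\delta^{\beta}R^{-1-\beta},
\]
which is \(R^{1+\beta}[Du]_{C^{0,\beta}}\le C\osc u\).

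I expect no step to be a serious obstacle. The real work is done in \Cref{prop:pointwise-expansion}, and this argument only needs to check the bookkeeping. Two points require care. First, the rescaled operator must stay in \(\mathfrak F\); this uses \(T_t\mathfrak F=\mathfrak F\) for all \(t>0\), and not merely compactness. Second, the ball radii must be chosen so that each expansion is used only within its range of validity \(|z-y|<\theta R/2\); this is why the threshold \(\theta R/8\) appears in the case split. All constants then depend only on \(A_{\beta,\mathfrak F}\) and \(\theta\), that is, only on \(\beta\) and \(\mathfrak F\).
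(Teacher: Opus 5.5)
Your proposal is correct and follows essentially the same route as the paper: normalize at each point of $B_{R/2}(x_0)$ using the invariance $T_t\mathfrak F=\mathfrak F$, apply \Cref{prop:pointwise-expansion}, and compare the two first-order expansions on a small ball via the slope bound for affine functions, with the far-apart case handled by the uniform gradient bound. The differences from the paper's argument are cosmetic: you subtract the midpoint value $\tfrac12(\sup u+\inf u)$ instead of $u(x)$, you blow up from radius $R/2$ instead of $1/4$, and you compare on $B_\delta(y_1)$ instead of the ball around the midpoint.
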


\begin{proof}
Translation and the change of variables $x=x_0+Ry$ reduce the problem to
$B_1$; the new operator is $T_{R^2}G$, which still belongs to
$\mathfrak F$. It remains to normalize the oscillation.
 
Let \(G\in\mathfrak F\), \(u\in\mathcal H_G(B_1)\), and
\(M\coloneqq\osc_{B_1}u\). If \(M=0\), then \(u\) is constant and there is
nothing to prove. Assume \(M>0\), fix \(x\in B_{1/2}\), and define
\[
  w_x(y)
  \coloneqq
  \frac{u(x+4^{-1}y)-u(x)}{M},
  \qquad y\in B_1.
\]
Since \(x+\frac14B_1\subset B_{3/4}\subset B_1\), the function \(w_x\) is
defined on \(B_1\), and \(\norm{w_x}_{L^\infty(B_1)}\leq1\). By
\Cref{lem:solution-rescaling}, \(w_x\) solves the equation for
\(T_{4^{-2}/M}G\), which belongs to $\mathfrak F$ by
\eqref{eq:family-invariance}. Applying \Cref{prop:pointwise-expansion} to
\(w_x\) and returning to the original variables gives
\begin{equation}
  \abs{u(x+h)-u(x)-Du(x)\cdot h}
  \leq
  KM\abs{h}^{1+\beta}
  \qquad(\abs{h}\leq r_0),
  \label{eq:center-expansion}
\end{equation}
where
\[
  r_0\coloneqq 4^{-1}\theta,
  \qquad
  K\coloneqq A_{\beta,\mathfrak F}4^{1+\beta}.
\]
The gradient bound in \Cref{prop:pointwise-expansion} also gives
\begin{equation}
  \abs{Du(x)}
  \leq
  4A_{\beta,\mathfrak F}M
  \qquad(x\in B_{1/2}).
  \label{eq:gradient-bound}
\end{equation}

Let \(x,y\in B_{1/2}\), set \(d\coloneqq\abs{x-y}\), and first suppose \(0<d\leq r_0\).  Define
\[
  m\coloneqq\frac{x+y}{2},
  \qquad
  \ell_x(z)
  \coloneqq
  u(x)+Du(x)\cdot(z-x),
  \qquad
  \ell_y(z)
  \coloneqq
  u(y)+Du(y)\cdot(z-y).
\]
For \(z\in B_{d/2}(m)\), both \(\abs{z-x}\) and \(\abs{z-y}\) are at most \(d\). Hence \eqref{eq:center-expansion} gives
\[
  \norm{\ell_x-\ell_y}_{L^\infty(B_{d/2}(m))}
  \leq
  2KMd^{1+\beta}.
\]
If \(L\) is affine on \(B_r(m)\), then
\[
  \abs{DL}
  \leq
  r^{-1}\norm{L}_{L^\infty(B_r(m))}.
\]
Applying this to \(L=\ell_x-\ell_y\) with \(r=d/2\) yields
\[
  \abs{Du(x)-Du(y)}
  \leq
  4KMd^\beta.
\]
If \(d>r_0\), then \eqref{eq:gradient-bound} gives
\[
  \abs{Du(x)-Du(y)}
  \leq
  8A_{\beta,\mathfrak F}
  r_0^{-\beta}Md^\beta.
\]
Together with \eqref{eq:gradient-bound}, this proves the normalized form of
\eqref{eq:family-estimate}.
\end{proof}

\begin{remark}[The endpoint]
The strict inequality \(\beta<\gamma\) is used only to create a small factor
at a profile-dependent radius.  The same proof reaches \(\beta=\gamma\) if,
for every closure pair \((G,w)\in\mathcal P(\mathfrak F)\), the profile \(w\)
satisfies
\[
 \liminf_{r\downarrow0}
 \frac{\|w-w(0)-Dw(0)\cdot(\,\cdot\,)\|_{L^\infty(B_r)}}
 {r^{1+\gamma}}=0.
\]
No endpoint assertion is made without such additional information.
\end{remark}

\begin{corollary}\label{cor:scaling-hull}
Let \(F\in\mathcal E_{n,\lambda,\Lambda}\) and \(0<\gamma\leq1\). Suppose every local solution of every equation
\[
  G(D^2u)=0,
  \qquad G\in\mathcal H(F),
\]
belongs to \(C^{1,\gamma}_{\mathrm{loc}}\). Then, for every
\(0<\beta<\gamma\), solutions of \(F(D^2u)=0\) satisfy \eqref{eq:family-estimate}, with a constant depending on \(\mathcal H(F)\).
\end{corollary}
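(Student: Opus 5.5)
The plan is to apply \Cref{thm:intro-main} with \(\mathfrak F\coloneqq\mathcal H(F)\) and then specialize to \(G=F\). For this we need three properties of \(\mathcal H(F)\): it is nonempty, it is compact, and it satisfies the invariance \eqref{eq:family-invariance}. The regularity hypothesis of the theorem is then exactly the assumption of the corollary.

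First, \(\mathcal H(F)\) is nonempty because it contains \(F=T_1F\). It is a closed subset of \(\mathcal E_{n,\lambda,\Lambda}\): each \(T_tF\) lies in \(\mathcal E_{n,\lambda,\Lambda}\) by \Cref{lem:operator-compactness}, and that space is closed under locally uniform limits. Since \(\mathcal E_{n,\lambda,\Lambda}\) is compact, \(\mathcal H(F)\) is compact.

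Second, we prove invariance. Fix \(s>0\). For an orbit point, \(T_s(T_tF)=T_{st}F\), which again lies in the orbit. For a general \(H\in\mathcal H(F)\), write \(H=\lim T_{t_j}F\) locally uniformly. The continuity of \(T_s\) from \Cref{lem:operator-compactness} gives \(T_sH=\lim T_{st_j}F\in\mathcal H(F)\), so \(T_s\mathcal H(F)\subset\mathcal H(F)\). Applying the same inclusion with \(1/s\) gives \(\mathcal H(F)=T_s(T_{1/s}\mathcal H(F))\subset T_s\mathcal H(F)\), hence equality. This is the identity already recorded after \Cref{lem:solution-rescaling}.

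With these facts, \Cref{thm:intro-main} yields \(C_{\beta,\mathcal H(F)}\) such that \eqref{eq:family-estimate} holds for every \(G\in\mathcal H(F)\), and in particular for \(G=F\). The constant depends only on \(\beta\) and the family \(\mathcal H(F)\). I expect no real obstacle. The only point needing care is the continuity of \(T_s\) in the locally uniform topology, which follows because \(T_s\) rescales compact sets of \(\Sn\) to compact sets.
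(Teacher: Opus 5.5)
Your proposal is correct and is the paper's proof: apply \Cref{thm:intro-main} to the family \(\mathcal H(F)\), using its compactness and \(T_s\)-invariance, and then take \(G=F\). Your verification of nonemptiness, compactness and invariance simply spells out what the paper records briefly after \Cref{lem:solution-rescaling}.
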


\begin{proof}
The compactness and invariance of \(\mathcal H(F)\) were established in
\Cref{sec:framework}. Apply \Cref{thm:intro-main} to this family
and then take \(G=F\).
\end{proof}

In particular, if $F\in\mathcal E_{n,\lambda,\Lambda}$ is positively
one-homogeneous, qualitative
$C^{1,\gamma}_{\mathrm{loc}}$ smoothness for its solutions implies
\eqref{eq:family-estimate} for every $\beta<\gamma$. The same conclusion
holds uniformly for any compact family of positively one-homogeneous
operators with common ellipticity constants, provided every local solution
for every member of the family has the stated qualitative regularity.

The qualitative and quantitative regularity thresholds therefore coincide.

\begin{corollary}\label{cor:critical-exponents}
Let \(\mathfrak F\subset\mathcal E_{n,\lambda,\Lambda}\) be a nonempty compact scaling-invariant family. Define
\begin{align*}
  \Qual(\mathfrak F)
  &\coloneqq
  \sup\Bigl\{
    \alpha\in(0,1]
    \colon
    \mathcal H_G(B)
    \subset
    C^{1,\alpha}_{\mathrm{loc}}(B)
    \text{ for every \(G\in\mathfrak F\) and every ball \(B\)}
  \Bigr\},\\
  \Est(\mathfrak F)
  &\coloneqq
  \sup\Bigl\{
    \alpha\in(0,1]
    \colon
    \eqref{eq:family-estimate}
    \text{ holds with \(\beta=\alpha\), uniformly over \(\mathfrak F\)}
  \Bigr\}.
\end{align*}
Then both defining sets are nonempty and
\[
  \Qual(\mathfrak F) = \Est(\mathfrak F).
\]
\end{corollary}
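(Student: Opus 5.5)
The corollary has three parts: nonemptiness of both defining sets, the inequality \(\Qual\le\Est\), and the reverse inequality \(\Est\le\Qual\).

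\textbf{Nonemptiness.} I will show that the qualitative set is nonempty; the estimate set is then nonempty by the next step. Since \(\mathfrak F\subset\mathcal E_{n,\lambda,\Lambda}\), Caffarelli's interior \(C^{1,\bar\alpha}\) regularity for uniformly elliptic equations \(G(D^2u)=0\) with \(G\) independent of \(x\) applies to every solution. Here \(\bar\alpha=\bar\alpha(n,\lambda,\Lambda)\in(0,1)\), and the estimate follows from Krylov--Safonov applied to difference quotients. Hence \(\bar\alpha\) lies in the qualitative set, and in particular \(\Qual(\mathfrak F)>0\).

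\textbf{The inequality \(\Est\le\Qual\).} This direction is essentially tautological. Suppose \eqref{eq:family-estimate} holds with \(\beta=\alpha\) uniformly over \(\mathfrak F\). Let \(G\in\mathfrak F\), let \(B\) be a ball, and let \(u\in\mathcal H_G(B)\). Every point of \(B\) has a ball \(B_R(x_0)\Subset B\) around it on which \(u\) is bounded, since viscosity solutions are continuous. The estimate then gives \(Du\in C^{0,\alpha}(B_{R/2}(x_0))\). Covering compact subsets by such balls shows \(u\in C^{1,\alpha}_{\mathrm{loc}}(B)\). So every admissible estimate exponent is an admissible qualitative exponent, and taking suprema gives \(\Est\le\Qual\).

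\textbf{The inequality \(\Qual\le\Est\).} This is where Theorem~\ref{thm:intro-main} enters. Let \(\beta<\Qual(\mathfrak F)\). By the definition of the supremum, there is an admissible \(\gamma\in(\beta,\Qual]\) such that every local solution belongs to \(C^{1,\gamma}_{\mathrm{loc}}\). I must check that the qualitative set is downward closed, or at least that such a \(\gamma\) exists. This holds because \(C^{1,\gamma}_{\mathrm{loc}}\subset C^{1,\gamma'}_{\mathrm{loc}}\) for \(\gamma'<\gamma\), and the supremum is approached by admissible exponents anyway. Theorem~\ref{thm:intro-main} applied with this \(\gamma\) yields \eqref{eq:family-estimate} with exponent \(\beta\), uniformly over \(\mathfrak F\). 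Hence \(\beta\) belongs to the estimate set, and \(\Est\ge\beta\). Letting \(\beta\uparrow\Qual\) gives \(\Qual\le\Est\).

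\textbf{Main obstacle.} The only delicate point is the endpoint. The qualitative supremum may be attained while the estimate at that exact exponent fails, which is why the statement compares suprema and not maximal elements. The loss \(\beta<\gamma\) from Theorem~\ref{thm:intro-main} disappears once suprema are taken. Beyond that, the proof rests only on the \(x\)-independence of the operators, which is needed for the Caffarelli \(C^{1,\bar\alpha}\) input in the nonemptiness step.
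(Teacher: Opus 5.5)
Your proof is correct and follows the paper's argument. The three steps match: the universal interior $C^{1,\bar\alpha}$ estimate gives nonemptiness, $\Est\le\Qual$ is tautological, and \Cref{thm:intro-main} is applied with an admissible $\gamma>\beta$ before letting $\beta\uparrow\Qual$. One small slip: the step $\Est\le\Qual$, which you label ``next'', does not show that the estimate set is nonempty. That follows from your third step (every $\beta<\bar\alpha$ is admissible), or directly from the quantitative form of the Caffarelli estimate you already invoke, as in the paper.
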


\begin{proof}
The universal interior $C^{1,\alpha_0}$ estimate
\cite[Corollary~5.7]{CaffarelliCabre1995} supplies an exponent
\[
  \alpha_0=\alpha_0(n,\lambda,\Lambda)>0,
\]
so both defining sets are nonempty. Every uniform estimate implies the corresponding qualitative
regularity, and therefore
\[
  \Est(\mathfrak F)\leq\Qual(\mathfrak F).
\]
Conversely, fix \(0<\beta<\Qual(\mathfrak F)\). By the definition of the
supremum, there is an exponent \(\gamma>\beta\) for which every local solution
of every equation in \(\mathfrak F\) belongs to
\(C^{1,\gamma}_{\mathrm{loc}}\). Applying \Cref{thm:intro-main} gives the
uniform estimate at exponent \(\beta\), hence
\(\beta\leq\Est(\mathfrak F)\). Letting
\(\beta\uparrow\Qual(\mathfrak F)\) proves the reverse inequality.
\end{proof}

\section{Centered renormalization and Hessian estimates}
\label{sec:second-order-self-improvement}

At the Hessian level the compactness mechanism yields more than uniformity of
a prescribed exponent: the existence of pointwise quadratic expansions alone
forces a positive H\"older exponent. The necessary renormalization now has two
parameters, reflecting both amplitude scaling and subtraction of a tangent
quadratic.

\subsection{Centered quadratic renormalization}

Subtracting an affine function leaves a translation-invariant equation unchanged. Subtracting a quadratic does not: it translates the operator in matrix space. For \(G\in\mathcal E_{n,\lambda,\Lambda}\), \(A\in\Sn\), and \(t>0\), define the centered quadratic renormalization
\[
  (\mathcal S_{A,t}G)(M)
  \coloneqq
  t\left[G(A+t^{-1}M)-G(A)\right].
\]
It belongs to \(\mathcal E_{n,\lambda,\Lambda}\), and direct calculation gives
\begin{equation}
  \mathcal S_{B,s}\bigl(\mathcal S_{A,t}G\bigr)
  =
  \mathcal S_{A+t^{-1}B,st}G,
  \qquad
  \mathcal S_{0,t}G=T_tG.
  \label{eq:centered-composition}
\end{equation}
We shall also use the joint continuity of this action: if $H_j\to H$
locally uniformly, $B_j\to B$, and $s_j\to s>0$, then
$\mathcal S_{B_j,s_j}H_j\to\mathcal S_{B,s}H$ locally uniformly. This
follows from the common Lipschitz bound for uniformly elliptic operators and
compactness of the matrix ranges involved.
If \(G(A)=0\), \(\pi\) is a quadratic polynomial with
\(D^2\pi=A\), and
\[
  v(x)
  \coloneqq
  \frac{u(x_0+rx)-\pi(x_0+rx)}{a},
  \qquad a,r>0,
\]
then \(G(D^2u)=0\) implies
\[
  \mathcal S_{A,r^2/a}G(D^2v)=0.
\]
Thus \(\mathcal S_{A,t}\), rather than \(T_t\) alone, is the action
closed under quadratic improvement of flatness.

\begin{lemma}\label{lem:zero-level-correction}
For every \(H\in\mathcal E_{n,\lambda,\Lambda}\) and \(A\in\Sn\),
there is a unique \(\tau_A(H)\in\R\) such that
\[
  H\bigl(A+\tau_A(H)I\bigr)=0.
\]
Moreover,
\begin{equation}
  \abs{\tau_A(H)}
  \leq
  \frac{\abs{H(A)}}{n\lambda}.
  \label{eq:zero-level-correction-bound}
\end{equation}
In particular, if \(H_j\to H\) locally uniformly and \(H(A)=0\), then
\(\tau_A(H_j)\to0\).
\end{lemma}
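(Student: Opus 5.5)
The plan is to study the scalar function
\[
  \phi(\tau)\coloneqq H(A+\tau I),\qquad \tau\in\R,
\]
and to read off existence, uniqueness and the bound from the ellipticity inequalities in \eqref{eq:intro-ellipticity} with \(N=sI\), \(s\geq0\), for which \(\operatorname{tr}N=ns\). For \(\sigma<\tau\), take \(M=A+\sigma I\) and \(N=(\tau-\sigma)I\). This gives
\[
  n\lambda(\tau-\sigma)\leq\phi(\tau)-\phi(\sigma)\leq n\Lambda(\tau-\sigma).
\]
So \(\phi\) is continuous and strictly increasing, with slope bounded below by \(n\lambda>0\).

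\emph{Existence and uniqueness.} The lower bound with \(\sigma=0\) gives \(\phi(\tau)\geq\phi(0)+n\lambda\tau\to+\infty\) as \(\tau\to+\infty\). Symmetrically, \(\phi(\tau)\to-\infty\) as \(\tau\to-\infty\). The intermediate value theorem then produces a zero, and strict monotonicity makes it unique. Call it \(\tau_A(H)\).

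\emph{The bound \eqref{eq:zero-level-correction-bound}.} Apply the lower inequality between \(0\) and \(\tau=\tau_A(H)\), in whichever order the two points lie. This yields
\[
  \abs{0-\phi(0)}=\abs{\phi(\tau)-\phi(0)}\geq n\lambda\abs{\tau},
\]
that is, \(\abs{\tau_A(H)}\leq\abs{H(A)}/(n\lambda)\).

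\emph{The continuity statement.} Suppose \(H_j\to H\) locally uniformly and \(H(A)=0\). The convergence holds in particular at the single matrix \(A\), so \(H_j(A)\to H(A)=0\). The bound just proved, applied to each \(H_j\) (all lie in \(\mathcal E_{n,\lambda,\Lambda}\) with the same \(\lambda\)), then gives \(\abs{\tau_A(H_j)}\leq\abs{H_j(A)}/(n\lambda)\to0\).

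No step is a genuine obstacle. The only point needing care is that the ellipticity inequality is stated only for \(N\geq0\). For this reason the two points \(\sigma<\tau\) must be ordered before taking \(N=(\tau-\sigma)I\), and the two sign cases of \(\tau_A(H)\) must be handled symmetrically in the bound.
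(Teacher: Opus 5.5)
Your proposal is correct and follows the paper's proof essentially step for step. Like the paper, you apply the ellipticity inequalities with \(N=(\tau-\sigma)I\) to get that \(\tau\mapsto H(A+\tau I)\) is strictly increasing and onto, compare with \(\tau=0\) to obtain the bound, and then evaluate the convergence \(H_j\to H\) at the single matrix \(A\).
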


\begin{proof}
For $s_2>s_1$, uniform ellipticity gives
\[
 n\lambda(s_2-s_1)
 \leq H(A+s_2I)-H(A+s_1I)
 \leq n\Lambda(s_2-s_1).
\]
Thus $s\mapsto H(A+sI)$ is strictly increasing and onto $\R$, proving
existence and uniqueness. Comparison with $s=0$ gives
\eqref{eq:zero-level-correction-bound}; the final assertion follows from
local uniform convergence evaluated at $A$.
\end{proof}

\begin{definition}
For \(F\in\mathcal E_{n,\lambda,\Lambda}\), define
\[
 \mathcal H^{(2)}(F)\coloneqq
 \overline{\{\mathcal S_{A,t}F:F(A)=0,\ t>0\}}^{
 C_{\mathrm{loc}}(\Sn)}.
\]
\end{definition}

\begin{proposition}\label{prop:centered-hull}
The family \(\mathcal H^{(2)}(F)\) is compact, contains \(F\), and is closed
under every admissible centered renormalization: if
\(H\in\mathcal H^{(2)}(F)\), \(H(B)=0\), and \(s>0\), then
\(\mathcal S_{B,s}H\in\mathcal H^{(2)}(F)\).
\end{proposition}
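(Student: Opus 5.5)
Compactness and the membership \(F\in\mathcal H^{(2)}(F)\) come first and are routine. Since \(\mathcal S_{A,t}F\in\mathcal E_{n,\lambda,\Lambda}\) for every \(A,t\), the generating set lies in \(\mathcal E_{n,\lambda,\Lambda}\). Its closure is therefore a closed subset of a compact metrizable space, by \Cref{lem:operator-compactness}, and hence compact. For membership, \(F(0)=0\) by normalization, so \(A=0\), \(t=1\) is admissible, and \(\mathcal S_{0,1}F=T_1F=F\) by \eqref{eq:centered-composition}.

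For closure under admissible renormalizations, I would treat generators first and then pass to limits.

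\emph{Generators.} Take \(H=\mathcal S_{A,t}F\) with \(F(A)=0\), and suppose \(H(B)=0\). By definition, \(H(B)=t[F(A+t^{-1}B)-F(A)]=tF(A+t^{-1}B)\), so \(F(A+t^{-1}B)=0\). The composition rule \eqref{eq:centered-composition} gives \(\mathcal S_{B,s}H=\mathcal S_{A+t^{-1}B,st}F\). This is again a generator, hence lies in the hull.

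\emph{General elements.} Let \(H\in\mathcal H^{(2)}(F)\) with \(H(B)=0\), and choose generators \(H_j=\mathcal S_{A_j,t_j}F\to H\) locally uniformly. The difficulty is that \(H_j(B)\) need not vanish, so \(\mathcal S_{B,s}H_j\) need not be admissible. I would correct the center using \Cref{lem:zero-level-correction}. Set \(B_j\coloneqq B+\tau_B(H_j)I\). Then \(H_j(B_j)=0\), and since \(H(B)=0\), the lemma gives \(\tau_B(H_j)\to0\), so \(B_j\to B\). By the generator case, \(\mathcal S_{B_j,s}H_j\in\mathcal H^{(2)}(F)\). The stated joint continuity of \((H,B,s)\mapsto\mathcal S_{B,s}H\) gives \(\mathcal S_{B_j,s}H_j\to\mathcal S_{B,s}H\) locally uniformly. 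Since the hull is closed, \(\mathcal S_{B,s}H\in\mathcal H^{(2)}(F)\).

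The main obstacle is this last step: the zero-level constraint is not preserved under limits of approximants. The zero-level correction lemma is exactly what repairs it, and continuity of the action then finishes the argument.
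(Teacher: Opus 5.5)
Your proposal is correct and follows essentially the same route as the paper. The paper likewise corrects the center to \(B_j=B+\tau_B(H_j)I\), rewrites \(\mathcal S_{B_j,s}H_j=\mathcal S_{A_j+t_j^{-1}B_j,st_j}F\) with \(F(A_j+t_j^{-1}B_j)=0\), and passes to the limit by joint continuity; it merely folds your separate ``generator'' step into that single computation.
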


\begin{proof}
Only the last assertion needs proof.  Choose
\[
 H_j=\mathcal S_{A_j,t_j}F\longrightarrow H,
 \qquad F(A_j)=0.
\]
Set $B_j=B+\tau_B(H_j)I$. The zero-level correction gives $B_j\to B$
and $H_j(B_j)=0$.
By \eqref{eq:centered-composition},
\[
 \mathcal S_{B_j,s}H_j
 =\mathcal S_{A_j+t_j^{-1}B_j,st_j}F.
\]
The new center belongs to \(F^{-1}(0)\), because \(H_j(B_j)=0\).
Joint continuity of the centered action permits passage to the limit and
proves the assertion. Compactness follows from
\Cref{lem:operator-compactness}, and
\(F=\mathcal S_{0,1}F\).
\end{proof}

A function is \emph{twice differentiable at $x_0$} if, for some
$p\in\Rn$ and $A\in\Sn$,
\begin{equation}
  u(x_0+h)=u(x_0)+p\cdot h+\frac12h\cdot Ah+o(|h|^2).
  \label{eq:quadratic-expansion}
\end{equation}
This pointwise property carries no modulus and gives no continuity, or even
local boundedness, of the resulting Hessian.

\begin{lemma}
\label{lem:quadratic-compatibility}
Let $G\in\mathcal E_{n,\lambda,\Lambda}$ and let $u$ be a viscosity
solution of $G(D^2u)=0$ near $x_0$.  If $u$ has the expansion
\eqref{eq:quadratic-expansion} at \(x_0\), then \(G(A)=0\).
\end{lemma}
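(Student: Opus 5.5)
We argue by perturbing the Taylor polynomial and using it as a test function. Suppose, to reach a contradiction, that \(G(A)>0\); the case \(G(A)<0\) is symmetric. Set \(\delta\coloneqq G(A)\) and choose \(\varepsilon>0\) with \(n\Lambda\varepsilon<\delta\). Such an \(\varepsilon\) exists because \(G(A-\varepsilon I)\ge G(A)-n\Lambda\varepsilon>0\) by the upper ellipticity bound in \eqref{eq:intro-ellipticity}. Consider the quadratic
\[
  \varphi(x)\coloneqq u(x_0)+p\cdot(x-x_0)+\tfrac12(x-x_0)\cdot(A-\varepsilon I)(x-x_0).
\]
By \eqref{eq:quadratic-expansion},
\[
  u(x)-\varphi(x)=\tfrac{\varepsilon}{2}\abs{x-x_0}^2+o(\abs{x-x_0}^2).
\]
Hence there is \(\rho>0\) such that \(u-\varphi\ge \tfrac{\varepsilon}{4}\abs{x-x_0}^2\ge0\) on \(B_\rho(x_0)\), with equality exactly at \(x_0\). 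Thus \(\varphi\) is a smooth function touching \(u\) from below at \(x_0\) in the sense of local minimum of \(u-\varphi\).

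The supersolution property of \(u\) at \(x_0\) now gives \(G(D^2\varphi(x_0))\le0\), that is, \(G(A-\varepsilon I)\le0\). This contradicts the choice of \(\varepsilon\), since \(G(A-\varepsilon I)\ge G(A)-n\Lambda\varepsilon>0\). The sign convention matters here: for a uniformly elliptic \(G\) that is increasing in \(M\), as in \eqref{eq:intro-ellipticity}, a test function from below yields \(G\le 0\). In the case \(G(A)<0\), we touch from above with \(A+\varepsilon I\). The subsolution inequality \(G(A+\varepsilon I)\ge0\) then contradicts \(G(A+\varepsilon I)\le G(A)+n\Lambda\varepsilon<0\).

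The only point needing care is the viscosity convention. We must fix which inequality corresponds to touching from below for the paper's ellipticity direction, where \(G\) is increasing. We also need that quadratic test functions with a strict local extremum suffice, which is standard in the definition of \(C\)-viscosity solutions. There is no genuine obstacle: the expansion \eqref{eq:quadratic-expansion} provides exactly the second-order contact needed. The \(\varepsilon I\) perturbation converts the \(o(\abs{h}^2)\) remainder into strict touching, and uniform ellipticity keeps the strict inequality. Combining the two cases gives \(G(A)=0\).
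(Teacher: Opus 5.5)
Your proof is correct and follows essentially the paper's route: you perturb the quadratic Taylor polynomial by a multiple of $\pm|x-x_0|^2$ to obtain test functions touching $u$ from below and from above, then apply the two viscosity inequalities for an increasing operator. The only difference is cosmetic. You argue by contradiction at a fixed $\varepsilon$, using the ellipticity bound $|G(A\pm\varepsilon I)-G(A)|\le n\Lambda\varepsilon$, whereas the paper derives $G(A-4\varepsilon I)\le 0\le G(A+4\varepsilon I)$ for every $\varepsilon>0$ and lets $\varepsilon\downarrow0$ using continuity of $G$.
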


\begin{proof}
Let $P(x_0+h)\coloneqq u(x_0)+p\cdot h+\frac12h\cdot Ah$.  For every
$\varepsilon>0$, choose a ball on which
$|u-P|\le\varepsilon|x-x_0|^2$.  Then
$P+2\varepsilon|x-x_0|^2$ touches $u$ from above at $x_0$ and
$P-2\varepsilon|x-x_0|^2$ touches it from below there.  Under the
increasing-operator convention, the viscosity inequalities give
$G(A-4\varepsilon I)\le0\le G(A+4\varepsilon I)$.  Continuity of $G$ and
$\varepsilon\downarrow0$ give $G(A)=0$.
\end{proof}

\begin{lemma}
\label{lem:quadratic-coefficients}
There is $C_n<\infty$ such that every quadratic polynomial $Q$ satisfies
\begin{equation}
 |D^2Q|+r^{-1}|DQ(x_0)|+r^{-2}|Q(x_0)|
 \le C_nr^{-2}\|Q\|_{L^\infty(B_r(x_0))}.
 \label{eq:quadratic-coefficient-estimate}
\end{equation}
\end{lemma}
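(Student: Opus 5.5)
The plan is to reduce to the unit ball centered at the origin by translation and dilation, and then to argue that the quadratic polynomials form a finite-dimensional space on which all norms are equivalent.

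\emph{Reduction.} Set
\[
 \widetilde Q(y)\coloneqq Q(x_0+ry),\qquad y\in B_1 .
\]
Then $\widetilde Q$ is again a quadratic polynomial, and
\[
 D^2\widetilde Q=r^2D^2Q,\qquad D\widetilde Q(0)=rDQ(x_0),\qquad \widetilde Q(0)=Q(x_0),
\]
while $\|\widetilde Q\|_{L^\infty(B_1)}=\|Q\|_{L^\infty(B_r(x_0))}$. Multiplying the target inequality \eqref{eq:quadratic-coefficient-estimate} by $r^2$, it suffices to show that
\[
 |D^2\widetilde Q|+|D\widetilde Q(0)|+|\widetilde Q(0)|\le C_n\|\widetilde Q\|_{L^\infty(B_1)}
\]
for every quadratic polynomial $\widetilde Q$ on $\Rn$.

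\emph{Norm equivalence.} Let $V$ be the finite-dimensional space of polynomials of degree at most two on $\Rn$. The left-hand side above is a norm on $V$, namely the sum of the absolute values of the coefficients, up to harmless constants. The sup norm on $B_1$ is also a norm on $V$: if a polynomial vanishes identically on an open ball, then all of its Taylor coefficients at the origin vanish. Since all norms on a finite-dimensional space are equivalent, the constant $C_n$ exists and depends only on $n$.

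\emph{Explicit alternative.} If an explicit constant is preferred, the coefficients can be read off from the values of $\widetilde Q$ directly:
\begin{itemize}
\item $\widetilde Q(0)$ is a value of $\widetilde Q$ at a point of $B_1$, so $|\widetilde Q(0)|\le\|\widetilde Q\|_{L^\infty(B_1)}$.
\item For $|t|\le\tfrac12$ and a unit vector $e$, the second difference $\widetilde Q(te)+\widetilde Q(-te)-2\widetilde Q(0)=t^2\,e\cdot D^2\widetilde Q\,e$ bounds the diagonal entries of the Hessian.
\item The first difference $\widetilde Q(te)-\widetilde Q(-te)=2t\,D\widetilde Q(0)\cdot e$ bounds the gradient.
\item Polarizing with the directions $e_i\pm e_j$ bounds the off-diagonal entries, and hence the Frobenius norm $|D^2\widetilde Q|$.
\end{itemize}

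There is no genuine obstacle here. The only point that needs care is checking that the powers of $r$ scale consistently under the reduction.
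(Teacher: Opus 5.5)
Your proof is correct and follows the paper's own argument: reduce to $x_0=0$, $r=1$ by translation and dilation (your scaling bookkeeping is right), then use equivalence of norms on the finite-dimensional space of polynomials of degree at most two. The finite-difference computation you add also works and gives an explicit $C_n$, which the paper does not need.
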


\begin{proof}
After translation and scaling it is enough to take $x_0=0$ and $r=1$.
On the finite-dimensional space of quadratic polynomials, the coefficient
norm on the left and the $L^\infty(B_1)$ norm are equivalent.
\end{proof}

\begin{theorem}
\label{thm:qualitative-c2-self-improvement}
Let $\mathfrak F\subset\mathcal E_{n,\lambda,\Lambda}$ be a nonempty compact
family satisfying
\begin{equation}
  \mathcal S_{A,t}G\in\mathfrak F
  \qquad\text{whenever }G\in\mathfrak F,\quad G(A)=0,\quad t>0.
  \label{eq:centered-forward-invariance}
\end{equation}
Assume that every $u\in\mathcal H_G(B)$, for every $G\in\mathfrak F$ and
every ball $B\subset\Rn$, is twice differentiable at every point of $B$.
Then there exist $\alpha_*=\alpha_*(\mathfrak F)\in(0,1)$ and
$C_*=C_*(\mathfrak F)<\infty$ such that
\begin{equation}
  R\|Du\|_{L^\infty(B_{R/2}(x_0))}
  +R^2\|D^2u\|_{L^\infty(B_{R/2}(x_0))}
  +R^{2+\alpha_*}[D^2u]_{C^{0,\alpha_*}(B_{R/2}(x_0))}
  \le C_*\osc_{B_R(x_0)}u
  \label{eq:qualitative-c2-estimate}
\end{equation}
for every $G\in\mathfrak F$ and every
$u\in\mathcal H_G(B_R(x_0))$.
\end{theorem}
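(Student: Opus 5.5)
The plan is to rerun the argument of \Cref{lem:finite-menu}, \Cref{lem:iteration} and \Cref{prop:pointwise-expansion} with quadratic polynomials in place of affine ones and the centered action \(\mathcal S_{A,t}\) in place of \(T_t\). The new feature is that the exponent is not given in advance. The idea is to demand only a fixed fraction of quadratic flatness, which is qualitative \(o(r^2)\) information. Compactness then yields a lower radius \(\theta\), and \(\alpha_*\) is defined from \(\theta\) by
\[
  \theta^{\alpha_*}=\tfrac12,
  \qquad\text{that is,}\qquad
  \alpha_*\coloneqq\frac{\log 2}{\log(1/\theta)}.
\]
With this choice \(\tfrac12 r^2\le r^{2+\alpha_*}\) for every \(r\in[\theta,1/4]\), so an improvement by the factor \(\tfrac12\) at a quadratic level is automatically an improvement at order \(2+\alpha_*\).

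\emph{Step 1 (second-order finite menu).} Fix \(q=(G,w)\in\mathcal P(\mathfrak F)\). By \Cref{lem:pair-compactness} and the hypothesis, \(w\) is twice differentiable at \(0\). Let \(Q_q\) be its Taylor quadratic and \(A_q=D^2Q_q\); then \(G(A_q)=0\) by \Cref{lem:quadratic-compatibility}. Choose \(\rho_q\in(0,1/4)\) with
\[
  \|w-Q_q\|_{L^\infty(B_{\rho_q})}<\tfrac18\rho_q^2 .
\]
The neighborhood \(\mathcal O_q\) must now also control the operator: take the pairs \((H,z)\) with \(\|z-w\|_{C(\overline B_{1/2})}<\tfrac18\rho_q^2\) and \(|\tau_{A_q}(H)|<\tfrac18\).

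For \((H,z)\in\mathcal O_q\), replace \(Q_q\) by
\[
  \widetilde Q=Q_q+\tfrac12\tau_{A_q}(H)\,|x|^2 .
\]
Then \(H(D^2\widetilde Q)=0\) by \Cref{lem:zero-level-correction}. The correction costs at most \(\tfrac1{16}\rho_q^2\) on \(B_{\rho_q}\), so
\[
  \|z-\widetilde Q\|_{L^\infty(B_{\rho_q})}\le\tfrac12\rho_q^2 .
\]
Openness of \(\mathcal O_q\) uses the continuity of \(H\mapsto\tau_{A_q}(H)\), which follows from \eqref{eq:zero-level-correction-bound} together with the Lipschitz bound of \Cref{lem:operator-compactness}. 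A finite subcover gives \(\theta=\min_i\rho_i\). Moreover \(|D^2\widetilde Q|\) and the lower coefficients are bounded by \(C_n\theta^{-2}\), by \Cref{lem:quadratic-coefficients}.

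I expect this zero-level matching to be the main obstacle. A nearby admissible solution solves a different operator than the center of the neighborhood, and its approximating quadratic must lie exactly on that operator's zero level. Otherwise the next renormalization \(\mathcal S_{A,t}\) is not admissible in \eqref{eq:centered-forward-invariance}.

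\emph{Step 2 (iteration and pointwise expansion).} Given \(v\in\mathcal H_G(B_1)\) with \(\|v\|\le1\), \(G\in\mathfrak F\), obtain \(r\in[\theta,1/4]\) and \(\widetilde Q\) from Step 1, and set
\[
  v_1(x)=\frac{v(rx)-\widetilde Q(rx)}{r^{2+\alpha_*}} .
\]
Then \(\|v_1\|_{L^\infty(B_1)}\le1\), and \(v_1\) solves \(\mathcal S_{D^2\widetilde Q,\,r^{-\alpha_*}}G\), which lies in \(\mathfrak F\) by \eqref{eq:centered-forward-invariance}. Iterating yields radii \(R_k\) and quadratics \(P_k\) with
\[
  \|v-P_k\|_{L^\infty(B_{R_k})}\le R_k^{2+\alpha_*} .
\]
The coefficient increments satisfy \(|D^2P_{k+1}-D^2P_k|\le CR_k^{\alpha_*}\), and similarly for the lower-order coefficients at the appropriate powers. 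Since \(R_k\le4^{-k}\), these series converge geometrically. Exactly as in \Cref{prop:pointwise-expansion}, this gives a uniform expansion
\[
  |v(x)-P(x)|\le A|x|^{2+\alpha_*}\quad(x\in B_\theta),
  \qquad |DP(0)|+|D^2P(0)|\le A .
\]

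\emph{Step 3 (estimate).} Reduce to \(R=1\) and \(x_0=0\) using \(\mathcal S_{0,t}=T_t\), which is admissible since \(G(0)=0\). At each \(x\in B_{1/2}\), normalize \((u(x+y/4)-u(x))/\osc u\) and apply Step 2. This yields \(|Du(x)|+|D^2u(x)|\le C\osc u\) and a uniform \(C^{2,\alpha_*}\) Taylor expansion at every point. For \(x,y\) with \(d=|x-y|\le r_0\), compare the two Taylor quadratics on \(B_{d/2}\bigl(\tfrac{x+y}2\bigr)\) using \Cref{lem:quadratic-coefficients}; this gives \(|D^2u(x)-D^2u(y)|\le C\osc u\, d^{\alpha_*}\). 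For \(d>r_0\), the Hessian bound suffices. This is the same structure as the proof of \Cref{thm:intro-main}.
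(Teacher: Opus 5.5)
Your proposal is correct and is essentially the paper's proof. It uses the same ingredients: fixed-fraction quadratic flatness from the Peano expansion, the scalar shift $\tau_{A_q}(H)I$ that places the Hessian on the nearby operator's zero level, and a finite subcover producing $\theta$. Your choice $\theta^{\alpha_*}=\tfrac12$ with normalization by $r^{2+\alpha_*}$ plays the role of the paper's factor $\eta$ with $\alpha_*<\log\eta/\log\theta$, and your coefficient bound via \Cref{lem:quadratic-coefficients} is a slightly cleaner substitute for the finite-menu constant $K$.

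The one point you leave implicit, which the paper verifies, is that the Peano coefficient $A_x$ is the classical Hessian. The gradient part of \Cref{lem:quadratic-coefficients} at the midpoint gives $\abs{Du(y)-Du(x)-A_x(y-x)}\le C\osc_{B_1}u\,\abs{x-y}^{1+\alpha_*}$, which settles it.
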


\begin{proof}
We first extract a uniform quadratic contraction from the pointwise
hypothesis. Fix $\eta\in(0,1/4)$. For
$(G,w)\in\mathcal P(\mathfrak F)$, let
\[
  \pi_w(x)=w(0)+p_w\cdot x+\frac12x\cdot A_wx
\]
be the expansion in \eqref{eq:quadratic-expansion}. By
\Cref{lem:quadratic-compatibility}, $G(A_w)=0$.
Choose $\rho_w\in(0,1/4)$ so that
\begin{equation}
  \|w-\pi_w\|_{L^\infty(B_{\rho_w})}
  <\frac{\eta}{3}\rho_w^2.
  \label{eq:qualitative-quadratic-smallness}
\end{equation}
For a nearby operator $H$, set
\[
  A_{w,H}\coloneqq A_w+\tau_{A_w}(H)I,
  \qquad
  \pi_{w,H}(x)\coloneqq w(0)+p_w\cdot x+\frac12x\cdot A_{w,H}x.
\]
By \Cref{lem:zero-level-correction}, $H(A_{w,H})=0$ and
$A_{w,H}\to A_w$ as $H\to G$. Moreover,
\[
 \|z-\pi_{w,H}\|_{L^\infty(B_{\rho_w})}
 \leq \|z-w\|_{L^\infty(B_{\rho_w})}
 +\|w-\pi_w\|_{L^\infty(B_{\rho_w})}
 +\frac12|\tau_{A_w}(H)|\rho_w^2.
\]
After shrinking a relative neighborhood of $(G,w)$ in
$\mathcal P(\mathfrak F)$, every pair $(H,z)$ in that neighborhood satisfies
\begin{equation}
  \|z-\pi_{w,H}\|_{L^\infty(B_{\rho_w})}
  \le \eta\rho_w^2,
  \qquad H(A_{w,H})=0.
  \label{eq:uniform-quadratic-contraction-local}
\end{equation}

Compactness of \(\mathcal P(\mathfrak F)\) now gives a finite subcover.
Consequently, there are \(K<\infty\) and radii
\[
 \rho_1,\ldots,\rho_N\in(0,1/4)
\]
such that
every normalized pair $(H,z)$ admits, for some
$r\in\{\rho_1,\ldots,\rho_N\}$, a quadratic polynomial
\[
  q(x)=a+b\cdot x+\frac12x\cdot Bx
\]
with
\begin{equation}
  H(B)=0,\qquad
  \|z-q\|_{L^\infty(B_r)}\le\eta r^2,\qquad
  |a|+|b|+|B|\le K.
  \label{eq:finite-quadratic-contraction}
\end{equation}
The last bound follows because only finitely many base jets occur and the
zero-level corrections are small on their chosen neighborhoods.

Set
\[
  \theta\coloneqq\min_i\rho_i,\qquad \Theta\coloneqq\max_i\rho_i.
\]
Take $G\in\mathfrak F$ and a solution $v$ in $B_1$ with
$\|v\|_{L^\infty(B_1)}\le1$. We construct radii $R_k$, polynomials $P_k$,
and normalized solutions $v_k$. Start with $R_0=1$, $P_0=0$, and, writing
$P_k(x)=c_k+p_k\cdot x+\frac12x\cdot A_kx$, define
\begin{equation}
  v_k(x)\coloneqq
  \frac{v(R_kx)-P_k(R_kx)}{\eta^kR_k^2}.
  \label{eq:c2-normalized-iterate}
\end{equation}
Assume $\|v_k\|_{L^\infty(B_1)}\le1$ and that it solves the equation for
\[
  G_k\coloneqq\mathcal S_{A_k,\eta^{-k}}G\in\mathfrak F.
\]
Inductively we also retain
\begin{equation}
 G(A_k)=0,
 \qquad
 \|v-P_k\|_{L^\infty(B_{R_k})}\le\eta^kR_k^2.
 \label{eq:c2-induction-invariants}
\end{equation}
Apply \eqref{eq:finite-quadratic-contraction} to obtain $r_k$ and
$q_k(x)=a_k+b_k\cdot x+\frac12x\cdot B_kx$. Define
\begin{align*}
  R_{k+1}&\coloneqq R_kr_k,\\
  P_{k+1}(y)&\coloneqq P_k(y)+\eta^kR_k^2q_k(y/R_k),\\
  G_{k+1}&\coloneqq\mathcal S_{B_k,\eta^{-1}}G_k.
\end{align*}
Then
\[
  v_{k+1}(x)=
  \frac{v_k(r_kx)-q_k(r_kx)}{\eta r_k^2},
  \qquad \|v_{k+1}\|_{L^\infty(B_1)}\le1.
\]
Moreover, $G_k(B_k)=0$, so centered invariance applies, and the composition
law \eqref{eq:centered-composition} gives
\[
  A_{k+1}=A_k+\eta^kB_k,\qquad
  G_{k+1}=\mathcal S_{A_{k+1},\eta^{-(k+1)}}G.
\]
Since $G_k(B_k)=0$ and $G(A_k)=0$, the definition of $G_k$ also gives
$G(A_{k+1})=0$.  The bound for $v_{k+1}$ is exactly the next error estimate
in \eqref{eq:c2-induction-invariants}.  This closes the induction.

Choose
\begin{equation}
  0<\alpha_*<\min\left\{1,\frac{\log\eta}{\log\theta}\right\}.
  \label{eq:choice-alpha-star}
\end{equation}
Since $R_k\ge\theta^k$, we have $\eta^k\le R_k^{\alpha_*}$.
The polynomial recursion gives
\[
 A_{k+1}-A_k=\eta^kB_k,\qquad
 p_{k+1}-p_k=\eta^kR_kb_k,\qquad
 c_{k+1}-c_k=\eta^kR_k^2a_k.
\]
The coefficient bounds in \eqref{eq:finite-quadratic-contraction} and
$R_{k+j}\le R_k\Theta^j$ show that $P_k$ converges to a quadratic polynomial
$P_\infty(x)=c+p\cdot x+\frac12x\cdot Ax$ and that
\begin{align*}
  |A-A_k|&\le C\eta^k\le CR_k^{\alpha_*},\\
  |p-p_k|&\le C\eta^kR_k\le CR_k^{1+\alpha_*},\\
  |c-c_k|&\le C\eta^kR_k^2\le CR_k^{2+\alpha_*}.
\end{align*}
The error estimate at the origin also gives $c=P_\infty(0)=v(0)$.
Together with \eqref{eq:c2-normalized-iterate}, and then choosing $k$ by
$R_{k+1}\le|x|<R_k$, this yields
\begin{equation}
  |v(x)-v(0)-p\cdot x-\tfrac12x\cdot Ax|
  \le C|x|^{2+\alpha_*}
  \qquad (x\in B_\theta).
  \label{eq:c2-pointwise-expansion}
\end{equation}
In particular $p=Dv(0)$, and $A$ is the unique quadratic coefficient in the
Peano expansion at the origin.  We verify below that this coefficient is the
classical derivative of the gradient.  Summing the coefficient increments
from $k=0$ gives $|p|+|A|\le C$, uniformly over all normalized pairs.

It remains to pass from the pointwise expansion to a Hessian estimate. Let
$v\in\mathcal H_G(B_1)$ satisfy $\osc_{B_1}v\le1$, and subtract a constant
so that $\|v\|_{L^\infty(B_1)}\le1$.  For $x\in B_{1/2}$ set
\[
 w_x(y)\coloneqq v(x+\tfrac14y)-v(x),
 \qquad y\in B_1.
\]
Then $\|w_x\|_{L^\infty(B_1)}\le1$, and $w_x$ solves the equation for
$T_{1/16}G=\mathcal S_{0,1/16}G\in\mathfrak F$. Applying
\eqref{eq:c2-pointwise-expansion} to $w_x$ and returning to the original
variables yields coefficients $p_x\in\Rn$ and $A_x\in\Sn$, and constants
$r_0,C>0$, independent of $G,v$, and $x$, such that
\begin{equation}
 \left|v(z)-v(x)-p_x\cdot(z-x)
 -\frac12(z-x)\cdot A_x(z-x)\right|
 \le C|z-x|^{2+\alpha_*}
 \label{eq:c2-uniform-local-expansion}
\end{equation}
whenever $x\in B_{1/2}$ and $|z-x|\le r_0$.  The coefficient bounds in the
origin construction also give
\begin{equation}
 |p_x|+|A_x|\le C
 \qquad(x\in B_{1/2}).
 \label{eq:c2-uniform-jet-bounds}
\end{equation}
The first-order part of \eqref{eq:c2-uniform-local-expansion} shows already
that $p_x=Dv(x)$.

Let $x,y\in B_{1/2}$ and set $d\coloneqq|x-y|$. The case $d=0$ is
immediate. First assume $0<d\le r_0$, and
let $Q_x,Q_y$ be the quadratic polynomials in
\eqref{eq:c2-uniform-local-expansion}. Both
expansions hold on $B_{d/2}((x+y)/2)$, hence
\[
 \|Q_x-Q_y\|_{L^\infty(B_{d/2}((x+y)/2))}
 \le Cd^{2+\alpha_*}.
\]
By \Cref{lem:quadratic-coefficients},
\[
 |A_x-A_y|\le Cd^{\alpha_*}
 \quad\text{and}\quad
 |D(Q_x-Q_y)((x+y)/2)|\le Cd^{1+\alpha_*}.
\]
Writing $h=y-x$ and using
\[
 D(Q_x-Q_y)((x+y)/2)
 =p_x-p_y+\tfrac12(A_x+A_y)h,
\]
we obtain
\[
 |p_y-p_x-A_xh|
 \le C d^{1+\alpha_*}.
\]
Thus $Dv=p$ is differentiable at every point, with derivative $A_x$.
Consequently $A_x=D^2v(x)$, and the first estimate above proves that the
Hessian is locally $\alpha_*$-H\"older continuous.
For $d>r_0$, the same Hessian estimate follows from
\eqref{eq:c2-uniform-jet-bounds}, after increasing $C$.  Thus the normalized
interior $C^{2,\alpha_*}$ estimate holds.  Translation, spatial rescaling,
and normalization by the oscillation prove
\eqref{eq:qualitative-c2-estimate}.
\end{proof}

\begin{corollary}
\label{cor:full-class-c2-equivalence}
Fix $n\ge2$ and $0<\lambda\le\Lambda$.  The following assertions are
equivalent.
\begin{enumerate}
\item Every local viscosity solution of every
  \(G\in\mathcal E_{n,\lambda,\Lambda}\) admits a quadratic Taylor
  expansion at every point.
\item Every such solution belongs to $C^2_{\mathrm{loc}}$.
\item There are $\alpha=\alpha(n,\lambda,\Lambda)>0$ and
  $C=C(n,\lambda,\Lambda)<\infty$ such that, for every ball
  $B_R(x_0)$, every $G\in\mathcal E_{n,\lambda,\Lambda}$, and every
  $u\in\mathcal H_G(B_R(x_0))$,
  \begin{equation}
   R\|Du\|_{L^\infty(B_{R/2}(x_0))}
   +R^2\|D^2u\|_{L^\infty(B_{R/2}(x_0))}
   +R^{2+\alpha}[D^2u]_{C^{0,\alpha}(B_{R/2}(x_0))}
   \le C\osc_{B_R(x_0)}u.
   \label{eq:full-class-c2-estimate}
  \end{equation}
\end{enumerate}
\end{corollary}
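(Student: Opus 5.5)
We prove the cycle of implications (3)$\Rightarrow$(2)$\Rightarrow$(1)$\Rightarrow$(3). The first two are immediate. If \eqref{eq:full-class-c2-estimate} holds, then every solution in $B_R(x_0)$ has a H\"older continuous Hessian on $B_{R/2}(x_0)$. Every point of the domain of a local solution is the center of such a ball, so $u\in C^{2,\alpha}_{\mathrm{loc}}\subset C^2_{\mathrm{loc}}$. Next, a $C^2$ function satisfies the Peano expansion \eqref{eq:quadratic-expansion} at every point, by Taylor's theorem with continuous second derivatives. This gives (2)$\Rightarrow$(1).

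The substantive implication is (1)$\Rightarrow$(3), and we obtain it by applying \Cref{thm:qualitative-c2-self-improvement} with $\mathfrak F=\mathcal E_{n,\lambda,\Lambda}$. We check the hypotheses of that theorem in order.
\begin{enumerate}
\item The family is nonempty, since for instance $\lambda\operatorname{tr}$ belongs to it.
\item It is compact by \Cref{lem:operator-compactness}.
\item Centered invariance \eqref{eq:centered-forward-invariance} holds. For $G\in\mathcal E_{n,\lambda,\Lambda}$, $G(A)=0$ and $t>0$, the operator $\mathcal S_{A,t}G$ vanishes at $0$ and inherits the ellipticity bounds. Indeed, $\mathcal S_{A,t}G(M+N)-\mathcal S_{A,t}G(M)=t[G(A+t^{-1}M+t^{-1}N)-G(A+t^{-1}M)]$ lies between $\lambda\operatorname{tr}N$ and $\Lambda\operatorname{tr}N$ for $N\ge0$. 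In fact this part of the argument does not even use the condition $G(A)=0$.
\item Assertion (1) says exactly that every $u\in\mathcal H_G(B)$ is twice differentiable at every point of $B$, which is the remaining qualitative hypothesis.
\end{enumerate}

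The theorem then yields $\alpha_*=\alpha_*(\mathcal E_{n,\lambda,\Lambda})$ and $C_*$ for which \eqref{eq:qualitative-c2-estimate} holds. Since the class $\mathcal E_{n,\lambda,\Lambda}$ is determined by $n,\lambda,\Lambda$ alone, these constants depend only on $(n,\lambda,\Lambda)$. Setting $\alpha=\alpha_*$ and $C=C_*$, the estimate \eqref{eq:qualitative-c2-estimate} is literally \eqref{eq:full-class-c2-estimate}.

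There is no real obstacle here. The only points needing care are that the full class is indeed closed under the centered action, and that the constants produced by the theorem depend only on the class and hence on $(n,\lambda,\Lambda)$.
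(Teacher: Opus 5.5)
Your proposal is correct and matches the paper's proof: (3)$\Rightarrow$(2)$\Rightarrow$(1) are immediate, and (1)$\Rightarrow$(3) comes from applying \Cref{thm:qualitative-c2-self-improvement} to the compact family $\mathcal E_{n,\lambda,\Lambda}$, which is closed under centered renormalization. You also observe that membership of $\mathcal S_{A,t}G$ in $\mathcal E_{n,\lambda,\Lambda}$ does not even require $G(A)=0$; this is accurate, and it is a small extra detail the paper leaves implicit.
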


\begin{proof}
The class $\mathcal E_{n,\lambda,\Lambda}$ is compact in the locally
uniform topology and is closed under every centered renormalization
$\mathcal S_{A,t}$ with $G(A)=0$.  Thus (1) implies (3) by
\Cref{thm:qualitative-c2-self-improvement}.  The implications
$(3)\Rightarrow(2)$ and $(2)\Rightarrow(1)$ are immediate.
\end{proof}

\subsection{Preservation of a prescribed exponent}
\label{sec:exponent-preservation}

The same compactness argument retains any prescribed qualitative H\"older
exponent.

\begin{theorem}
\label{thm:second-order-banach-steinhaus}
Let \(0<\gamma\leq1\), and let
\(\mathfrak F\subset\mathcal E_{n,\lambda,\Lambda}\) be nonempty, compact,
and satisfy \eqref{eq:centered-forward-invariance}.  Assume every local
solution of every equation in \(\mathfrak F\) belongs to
\(C^{2,\gamma}_{\mathrm{loc}}\).  Then, for every \(0<\beta<\gamma\), there is
\(C_{\beta,\mathfrak F}<\infty\) such that
\begin{equation}
 R\|Du\|_{L^\infty(B_{R/2}(x_0))}
 +R^2\|D^2u\|_{L^\infty(B_{R/2}(x_0))}
 +R^{2+\beta}[D^2u]_{C^{0,\beta}(B_{R/2}(x_0))}
 \le C_{\beta,\mathfrak F}\osc_{B_R(x_0)}u
 \label{eq:second-order-family-estimate}
\end{equation}
for every \(G\in\mathfrak F\) and
\(u\in\mathcal H_G(B_R(x_0))\).
\end{theorem}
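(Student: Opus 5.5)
We combine the first-order mechanism of \Cref{lem:finite-menu} with the centered quadratic iteration in the proof of \Cref{thm:qualitative-c2-self-improvement}. The aim is a uniform quadratic improvement at relative size \(r^{\beta}\) instead of a fixed \(\eta\). Concretely, we seek \(\theta\in(0,1/4)\) and \(K<\infty\) with the following property. For every normalized pair \((H,z)\) there are \(r\in[\theta,1/4]\) and \(q(x)=a+b\cdot x+\frac12x\cdot Bx\) such that
\[
 H(B)=0,\qquad \|z-q\|_{L^\infty(B_r)}\le r^{2+\beta},\qquad |a|+|b|+|B|\le K .
\]

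The first step proves this menu by compactness of \(\mathcal P(\mathfrak F)\). Fix \((G,w)\in\mathcal P(\mathfrak F)\). By \Cref{lem:pair-compactness}, \(w\) solves \(G(D^2w)=0\) in \(B_{1/2}\), so by hypothesis it is \(C^{2,\gamma}\) near \(0\). Let \(\pi_w\) be its second-order Taylor polynomial, and let \(L_w\) be a constant with \(\|w-\pi_w\|_{L^\infty(B_r)}\le L_wr^{2+\gamma}\) for \(r\le 1/4\). \Cref{lem:quadratic-compatibility} gives \(G(D^2\pi_w)=0\). Since \(\gamma>\beta\), we can choose \(\rho_w\in(0,1/4)\) with \(L_w\rho_w^{\gamma-\beta}<1/3\), so that \(\|w-\pi_w\|_{L^\infty(B_{\rho_w})}<\frac13\rho_w^{2+\beta}\).

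For nearby \(H\), we correct the Hessian to \(A_w+\tau_{A_w}(H)I\) as in the proof of \Cref{thm:qualitative-c2-self-improvement}. \Cref{lem:zero-level-correction} makes \(\tau_{A_w}(H)\to0\) as \(H\to G\). On a small relative neighborhood of \((G,w)\) both the change in the profile and the correction term \(\frac12|\tau|\rho_w^2\) are below \(\frac13\rho_w^{2+\beta}\). This gives the bound at the fixed radius \(\rho_w\). A finite subcover then yields \(\theta=\min_i\rho_i\) and the coefficient bound \(K\), exactly as before. This step is where the strict inequality \(\beta<\gamma\) is used, and it is where I expect the main care to be needed. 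The threshold must be the scale-adapted quantity \(r^{2+\beta}\) at the chosen radius, not a fixed \(\eta\). Otherwise the iteration below would only return the exponent \(\log\eta/\log\theta\).

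The second step iterates. We set \(R_{k+1}=R_kr_k\) and
\[
 v_k(x)=\frac{v(R_kx)-P_k(R_kx)}{R_k^{2+\beta}} .
\]
This function solves the equation for \(G_k=\mathcal S_{A_k,R_k^{-\beta}}G\), which lies in \(\mathfrak F\) by \eqref{eq:centered-forward-invariance}. From \(q_k\) we define
\[
 v_{k+1}(x)=\frac{v_k(r_kx)-q_k(r_kx)}{r_k^{2+\beta}},\qquad P_{k+1}(y)=P_k(y)+R_k^{2+\beta}q_k(y/R_k).
\]
The one-step bound gives \(\|v_{k+1}\|_{L^\infty(B_1)}\le1\). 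The composition law \eqref{eq:centered-composition} shows that \(G_{k+1}=\mathcal S_{B_k,r_k^{-\beta}}G_k\) has the stated form, with \(A_{k+1}=A_k+R_k^{\beta}B_k\). Moreover \(G(A_{k+1})=0\), because \(G_k(B_k)=0\), as in the previous proof.

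The coefficient increments are \(R_k^{\beta}B_k\), \(R_k^{1+\beta}b_k\) and \(R_k^{2+\beta}a_k\). Since \(R_{k+j}\le4^{-j}R_k\), these series converge geometrically. This gives \(|A-A_k|\le CR_k^\beta\) and the analogous bounds for the lower-order coefficients. Choosing \(k\) with \(R_{k+1}\le|x|<R_k\) and using \(R_{k+1}\ge\theta R_k\) then yields the uniform pointwise expansion
\[
 \bigl|v(x)-v(0)-p\cdot x-\tfrac12x\cdot Ax\bigr|\le C|x|^{2+\beta}\qquad(x\in B_\theta),
\]
together with \(|p|+|A|\le C\).

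The final step is verbatim the end of the proof of \Cref{thm:qualitative-c2-self-improvement}, with \(\alpha_*\) replaced by \(\beta\). We recenter at each \(x\in B_{1/2}\) using \(T_{1/16}G\in\mathfrak F\). We compare the two quadratic polynomials on \(B_{d/2}((x+y)/2)\) via \Cref{lem:quadratic-coefficients}. This identifies \(A_x=D^2v(x)\) and gives \([D^2v]_{C^{0,\beta}}\le C\) for \(d\le r_0\); the uniform jet bounds handle \(d>r_0\). Translation, spatial rescaling and normalization by the oscillation then give \eqref{eq:second-order-family-estimate}.
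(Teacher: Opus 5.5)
Your proposal is correct and follows essentially the same route as the paper's proof. It builds a compactness menu at the scale-adapted threshold $r^{2+\beta}$ with zero-level corrected Taylor polynomials, iterates with $G_k=\mathcal S_{A_k,R_k^{-\beta}}G$ through the composition law, and recenters with $T_{1/16}G$ to compare jets. The only differences from the paper are bookkeeping: you use a $\tfrac13$ splitting and threshold $r^{2+\beta}$ where the paper uses $\tfrac18$ and $\tfrac12 r^{2+\beta}$, and you normalize by $R_k^{2+\beta}$ directly instead of writing $E_kR_k^2$ with $E_k=R_k^\beta$.
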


\begin{proof}
The proof follows the preceding argument, with the decay now calibrated to
$\beta$. For a normalized pair $(G,w)$, let $\pi_w$ be the quadratic Taylor
polynomial of $w$ at the origin. Then $G(D^2\pi_w)=0$, and, since
$\gamma>\beta$, there is $\rho_w\in(0,1/4)$ such that
\[
 \|w-\pi_w\|_{L^\infty(B_{\rho_w})}
 <\frac18\rho_w^{2+\beta}.
\]
If \(H\) is close to \(G\), the scalar correction from
\Cref{lem:zero-level-correction} changes \(D^2\pi_w\) by \(o(1)I\) and
places it on \(H^{-1}(0)\). Strictness of the last inequality and compactness
of $\mathcal P(\mathfrak F)$ therefore give numbers
\(\theta\in(0,1/4)\) and \(K<\infty\) with the following property: for every
normalized pair \((H,z)\), there are \(r\in[\theta,1/4]\) and a quadratic
polynomial
\[
 q(x)=a+b\cdot x+\frac12x\cdot Bx
\]
such that
\begin{equation}
 H(B)=0,\qquad
 \|z-q\|_{L^\infty(B_r)}\le\frac12r^{2+\beta},
 \qquad |a|+|b|+|B|\le K.
 \label{eq:quadratic-one-step}
\end{equation}

Fix $G\in\mathfrak F$ and $v\in\mathcal H_G(B_1)$ with
$\|v\|_{L^\infty(B_1)}\le1$. Put $R_0=1$, $P_0=0$, and
$E_k\coloneqq R_k^\beta$. If
\(P_k(x)=c_k+p_k\cdot x+\frac12x\cdot A_kx\), set
\[
 v_k(x)\coloneqq\frac{v(R_kx)-P_k(R_kx)}{E_kR_k^2},
 \qquad
 G_k\coloneqq\mathcal S_{A_k,E_k^{-1}}G.
\]
Inductively, we require
\begin{equation}
 \|v-P_k\|_{L^\infty(B_{R_k})}\le E_kR_k^2,
 \qquad G(A_k)=0,
 \qquad \|v_k\|_{L^\infty(B_1)}\le1.
 \label{eq:prescribed-exponent-invariants}
\end{equation}
These conditions hold at $k=0$. Since $G_k\in\mathfrak F$, apply
\eqref{eq:quadratic-one-step} to \(v_k\), obtaining \(r_k\) and \(q_k\),
and define
\[
 R_{k+1}\coloneqq R_kr_k,\qquad E_{k+1}\coloneqq E_kr_k^\beta,
 \qquad
 P_{k+1}(y)\coloneqq P_k(y)+E_kR_k^2q_k(y/R_k).
\]
Writing $B_k=D^2q_k$, the next normalized function is
\[
 v_{k+1}(x)
 =\frac{v_k(r_kx)-q_k(r_kx)}{r_k^{2+\beta}},
 \qquad \|v_{k+1}\|_{L^\infty(B_1)}\le\frac12.
\]
Since $G_k(B_k)=0$, the composition rule
\eqref{eq:centered-composition} gives
\[
 G_{k+1}
 =\mathcal S_{B_k,r_k^{-\beta}}G_k
 =\mathcal S_{D^2P_{k+1},E_{k+1}^{-1}}G\in\mathfrak F,
\]
and $v_{k+1}$ solves the equation for $G_{k+1}$. We also have
$G(D^2P_{k+1})=0$. Rescaling the error gives the first condition in
\eqref{eq:prescribed-exponent-invariants} at index $k+1$. Thus the
construction continues. Since $E_k=R_k^\beta$, the coefficient increments
satisfy
\begin{align*}
 |D^2P_{k+1}-D^2P_k|&\le KR_k^\beta,\\
 |DP_{k+1}(0)-DP_k(0)|&\le KR_k^{1+\beta},\\
 |P_{k+1}(0)-P_k(0)|&\le KR_k^{2+\beta}.
\end{align*}
They are summable. Hence $P_k$ converges to a quadratic polynomial
$P_\infty$, with
\begin{align*}
 |D^2(P_\infty-P_k)|&\le CR_k^\beta,\\
 |D(P_\infty-P_k)(0)|&\le CR_k^{1+\beta},\\
 |(P_\infty-P_k)(0)|&\le CR_k^{2+\beta}.
\end{align*}
The error at the origin shows that $P_\infty(0)=v(0)$. Given
$0<r<\theta$, choose $k$ so that $R_{k+1}\le r<R_k$. The induction
estimate, these tail bounds, and $R_{k+1}\ge\theta R_k$ yield
\[
 \|v-P_\infty\|_{L^\infty(B_r)}\le Cr^{2+\beta}
 \qquad(0<r<\theta),
\]
with uniform bounds for its linear and quadratic coefficients. Thus
$P_\infty$ is the quadratic Taylor polynomial of $v$ at the origin.

Apply this estimate after recentering at every \(x\in B_{1/2}\).  Comparing
the two quadratic polynomials on
\(B_{|x-y|/2}((x+y)/2)\) and using
\Cref{lem:quadratic-coefficients} gives
\[
 |D^2v(x)-D^2v(y)|\le C|x-y|^\beta.
\]
The gradient part of the same coefficient estimate gives the jet-coherence
relation proved in \Cref{thm:qualitative-c2-self-improvement}, and hence the
quadratic coefficients are the classical Hessian coefficients.
The coefficient bounds give the corresponding \(L^\infty\) estimates for
\(Dv\) and \(D^2v\).  Translation, spatial rescaling, and normalization by
the oscillation yield \eqref{eq:second-order-family-estimate}.
\end{proof}

\begin{corollary}
\label{cor:centered-hull-criterion}
Let \(F\in\mathcal E_{n,\lambda,\Lambda}\).  If every local solution of
every equation in \(\mathcal H^{(2)}(F)\) is twice differentiable
everywhere, then solutions of \(F(D^2u)=0\) satisfy
\eqref{eq:qualitative-c2-estimate} for some \(\alpha_*>0\).  If all these
solutions belong to \(C^{2,\gamma}_{\mathrm{loc}}\), then
\eqref{eq:second-order-family-estimate} holds for every \(\beta<\gamma\).
\end{corollary}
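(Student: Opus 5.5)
The plan is to apply \Cref{thm:qualitative-c2-self-improvement} and \Cref{thm:second-order-banach-steinhaus} to the family \(\mathfrak F\coloneqq\mathcal H^{(2)}(F)\), and then specialize to the member \(F\).

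First, I check the structural hypotheses. By \Cref{prop:centered-hull}, \(\mathcal H^{(2)}(F)\) is nonempty (it contains \(F=\mathcal S_{0,1}F\)) and compact in the locally uniform topology. Here \(F(0)=0\) by the normalization built into \(\mathcal E_{n,\lambda,\Lambda}\). The same proposition says the family is closed under every admissible centered renormalization: if \(H\in\mathcal H^{(2)}(F)\), \(H(B)=0\) and \(s>0\), then \(\mathcal S_{B,s}H\in\mathcal H^{(2)}(F)\). This is exactly the forward invariance \eqref{eq:centered-forward-invariance}.

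Second, the qualitative hypothesis of the corollary is, verbatim, the hypothesis of \Cref{thm:qualitative-c2-self-improvement} for this family. Every local solution of every \(G\in\mathcal H^{(2)}(F)\) is twice differentiable at every point of its ball. The theorem therefore yields \(\alpha_*=\alpha_*(\mathcal H^{(2)}(F))>0\) and \(C_*\) such that \eqref{eq:qualitative-c2-estimate} holds for every \(G\in\mathcal H^{(2)}(F)\), in particular for \(G=F\). In the same way, if all such solutions lie in \(C^{2,\gamma}_{\mathrm{loc}}\), \Cref{thm:second-order-banach-steinhaus} gives \eqref{eq:second-order-family-estimate} for each \(\beta<\gamma\), and we again take \(G=F\).

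There is no real obstacle here; the substantive work is contained in \Cref{prop:centered-hull}. The one point to watch is the scaling step at the end of both theorems. There, \(w_x\) solves the equation for \(T_{1/16}G\), and \(T_{1/16}G=\mathcal S_{0,1/16}G\). This is admissible because \(G(0)=0\) for every member of \(\mathcal E_{n,\lambda,\Lambda}\), so \(0\) is always a zero-level center, and the invariance covers it. Likewise, the reduction of a general ball \(B_R(x_0)\) to \(B_1\), together with normalization by the oscillation, uses only \(\mathcal S_{0,t}=T_t\). These rescalings therefore stay inside \(\mathcal H^{(2)}(F)\), and the constants depend only on this hull.
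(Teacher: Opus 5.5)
Your proposal is correct and is the paper's own argument: apply \Cref{thm:qualitative-c2-self-improvement} and \Cref{thm:second-order-banach-steinhaus} to the compact, centered-invariant family \(\mathcal H^{(2)}(F)\) given by \Cref{prop:centered-hull}, and then take \(G=F\). Your remark that the rescalings \(T_t=\mathcal S_{0,t}\) used for localization and normalization are admissible, because \(G(0)=0\) for every \(G\in\mathcal E_{n,\lambda,\Lambda}\), is a correct detail that the paper leaves implicit.
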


\begin{proof}
Apply
\Cref{thm:qualitative-c2-self-improvement,thm:second-order-banach-steinhaus}
to the compact centered-invariant family given by \Cref{prop:centered-hull}.
\end{proof}

\begin{corollary}[Equality of Hessian thresholds]
\label{cor:second-order-thresholds}
Let \(\mathfrak F\) be a nonempty compact family satisfying
\eqref{eq:centered-forward-invariance}, and suppose its qualitative
\(C^{2,\alpha}\) range is nonempty.  Then
\[
\begin{aligned}
 &\sup\{\alpha\in(0,1]:\mathcal H_G(B)\subset C^{2,\alpha}_{\mathrm{loc}}(B)
       \text{ for every }G\in\mathfrak F\}\\
 &\qquad=
 \sup\{\alpha\in(0,1]:\eqref{eq:second-order-family-estimate}
       \text{ holds uniformly over }\mathfrak F\}.
\end{aligned}
\]
\end{corollary}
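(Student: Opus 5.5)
The plan is to mirror the proof of \Cref{cor:critical-exponents}, replacing \Cref{thm:intro-main} by \Cref{thm:second-order-banach-steinhaus}. Write $\alpha_{\mathrm{q}}$ for the left-hand supremum (the qualitative $C^{2,\alpha}$ threshold) and $\alpha_{\mathrm{e}}$ for the right-hand one (the estimate threshold).

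First I will check that both defining sets are nonempty, so that both suprema are meaningful. The qualitative set is nonempty by hypothesis. Pick $\gamma$ in it; for any $\beta<\gamma$, \Cref{thm:second-order-banach-steinhaus} gives \eqref{eq:second-order-family-estimate} uniformly over $\mathfrak F$. Hence the estimate set is nonempty as well.

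Next comes the easy inequality $\alpha_{\mathrm{e}}\le\alpha_{\mathrm{q}}$. Suppose \eqref{eq:second-order-family-estimate} holds with exponent $\alpha$ uniformly over $\mathfrak F$. Take any $u\in\mathcal H_G(B)$ and any $x_0\in B$, and choose $R$ with $B_R(x_0)\subset B$. Then $u$ is bounded on $B_R(x_0)$, either after shrinking $R$ or by continuity of viscosity solutions. The estimate then bounds $[D^2u]_{C^{0,\alpha}(B_{R/2}(x_0))}$, so $u\in C^{2,\alpha}_{\mathrm{loc}}(B)$ and $\alpha$ lies in the qualitative set. Here I must make sure that the estimate's finiteness really includes the assertion that $D^2u$ exists classically. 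That is built into reading \eqref{eq:second-order-family-estimate} as a $C^{2,\alpha}$ bound, just as in the first-order corollary.

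For the reverse inequality, fix $0<\beta<\alpha_{\mathrm{q}}$. By the definition of the supremum there is $\gamma>\beta$ in the qualitative set. Note that the qualitative set is downward closed in $(0,1]$ on bounded balls, since $C^{2,\gamma}_{\mathrm{loc}}\subset C^{2,\beta}_{\mathrm{loc}}$, though this is not even needed. Applying \Cref{thm:second-order-banach-steinhaus} with this $\gamma$ yields the uniform estimate at exponent $\beta$, so $\beta\le\alpha_{\mathrm{e}}$. Letting $\beta\uparrow\alpha_{\mathrm{q}}$ gives $\alpha_{\mathrm{q}}\le\alpha_{\mathrm{e}}$.

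There is no real obstacle here. The only point requiring care is the endpoint: the theorem only delivers $\beta<\gamma$, which is why the conclusion concerns suprema rather than maxima. I will also note that compactness and centered invariance \eqref{eq:centered-forward-invariance} are exactly the hypotheses that \Cref{thm:second-order-banach-steinhaus} requires.
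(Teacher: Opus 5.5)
Your proposal is correct and follows the paper's proof. The inequality $\alpha_{\mathrm{e}}\le\alpha_{\mathrm{q}}$ comes from the fact that a uniform estimate implies the corresponding qualitative $C^{2,\alpha}_{\mathrm{loc}}$ regularity, and the reverse inequality from choosing $\gamma>\beta$ in the qualitative range and applying \Cref{thm:second-order-banach-steinhaus}; your explicit nonemptiness check for the estimate set is a harmless addition that the paper leaves implicit in its hypothesis.
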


\begin{proof}
Every estimate gives the corresponding qualitative regularity.  Conversely,
if \(\beta\) lies below the qualitative supremum, choose
\(\gamma>\beta\) in the qualitative range and apply
\Cref{thm:second-order-banach-steinhaus}.
\end{proof}

\section{Tangent equations and partial regularity}
\label{sec:partial-regularity}

The full centered hull is the correct object for a global $C^{2,\alpha}$
theory.  Partial regularity requires less.  Near a point where a solution has
a quadratic expansion, the error is $o(r^2)$; after division by its amplitude,
the centered parameter tends only in the direction $t\to\infty$.  We show
that qualitative regularity on this one-sided tail is enough to recover the
flat-solution input in the Armstrong--Silvestre--Smart argument
\cite{ArmstrongSilvestreSmart2012}.  No differentiability of the original
operator is required.

\subsection{From tangent equations to flatness}

For $F\in\mathcal E_{n,\lambda,\Lambda}$ and $R>0$, define
\begin{equation}
 \mathcal K_R^{(2)}(F)
 \coloneqq
 \overline{\left\{
   \mathcal S_{A,t}F:F(A)=0,\ t\ge R
 \right\}}^{\,C_{\mathrm{loc}}(\Sn)}
 \label{eq:centered-tail}
\end{equation}
and the \emph{one-sided centered tangent hull}
\begin{equation}
 \mathfrak T_\infty^{(2)}(F)
 \coloneqq\bigcap_{R>0}\mathcal K_R^{(2)}(F).
 \label{eq:centered-tangent-hull}
\end{equation}
Equivalently, $H\in\mathfrak T_\infty^{(2)}(F)$ if and only if
\[
 \mathcal S_{A_j,t_j}F\longrightarrow H,
 \qquad F(A_j)=0,\qquad t_j\longrightarrow\infty,
\]
after passage to a suitable sequence.  The arbitrary zero-level centers are
essential: they are generated when the quadratic part of a solution is
subtracted.

\begin{proposition}
\label{prop:centered-tangent-hull}
The family $\mathfrak T_\infty^{(2)}(F)$ is nonempty and compact.  Moreover:
\begin{enumerate}
\item if $H\in\mathcal K_R^{(2)}(F)$, $H(B)=0$, and $s>0$, then
\begin{equation}
 \mathcal S_{B,s}H\in\mathcal K_{sR}^{(2)}(F);
 \label{eq:centered-tail-covariance}
\end{equation}
in particular, $\mathfrak T_\infty^{(2)}(F)$ satisfies the centered
invariance \eqref{eq:centered-forward-invariance};
\item for every \(s>0\),
\begin{equation}
 \mathfrak T_\infty^{(2)}(T_sF)
 =\mathfrak T_\infty^{(2)}(F).
 \label{eq:tangent-hull-amplitude-invariance}
\end{equation}
\end{enumerate}
\end{proposition}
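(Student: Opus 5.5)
We prove the four assertions in the order nonempty/compact, covariance (1), centered invariance, amplitude invariance (2). Throughout, the key tools are the composition law \eqref{eq:centered-composition}, the zero-level correction of \Cref{lem:zero-level-correction}, and joint continuity of the centered action, exactly as in the proof of \Cref{prop:centered-hull}.

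\emph{Nonempty and compact.} Each $\mathcal K_R^{(2)}(F)$ is a closed subset of the compact space $\mathcal E_{n,\lambda,\Lambda}$ (\Cref{lem:operator-compactness}), hence compact. These sets decrease as $R$ increases. Each is nonempty, since $F(A)=0$ has a solution $A=\tau_0(F)I$, so $T_tF\in\mathcal K_R^{(2)}(F)$ for $t\ge R$. By the finite intersection property, the intersection $\mathfrak T_\infty^{(2)}(F)$ is nonempty and compact. The sequential characterization follows from metrizability: pick $H_j\in\{\mathcal S_{A,t}F: F(A)=0,\ t\ge j\}$ within distance $1/j$ of $H$.

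\emph{Covariance \eqref{eq:centered-tail-covariance}.} Let $H\in\mathcal K_R^{(2)}(F)$ with $H(B)=0$, and choose $H_j=\mathcal S_{A_j,t_j}F\to H$ with $F(A_j)=0$ and $t_j\ge R$. Set $B_j=B+\tau_B(H_j)I$. Then $H_j(B_j)=0$, and $B_j\to B$ by \Cref{lem:zero-level-correction}. By \eqref{eq:centered-composition},
\[
\mathcal S_{B_j,s}H_j=\mathcal S_{A_j+t_j^{-1}B_j,\,st_j}F .
\]
Here $F(A_j+t_j^{-1}B_j)=t_j^{-1}H_j(B_j)=0$ by the definition of $\mathcal S_{A_j,t_j}F$, and $st_j\ge sR$. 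Joint continuity gives $\mathcal S_{B_j,s}H_j\to\mathcal S_{B,s}H$, so the limit lies in the closed set $\mathcal K_{sR}^{(2)}(F)$. For the tangent hull, take $H\in\mathfrak T_\infty^{(2)}(F)$, $H(B)=0$, and fixed $s>0$. For every $R'$, apply the above with $R=R'/s$ to get $\mathcal S_{B,s}H\in\mathcal K_{R'}^{(2)}(F)$. Intersecting over $R'$ gives \eqref{eq:centered-forward-invariance}.

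\emph{Amplitude invariance \eqref{eq:tangent-hull-amplitude-invariance}.} The idea is to check that the generating sets coincide up to a rescaling of the tail parameter. First, $T_sF(A)=0$ if and only if $F(s^{-1}A)=0$. Second, direct computation gives
\[
\mathcal S_{A,t}(T_sF)(M)=ts\bigl[F(s^{-1}A+(ts)^{-1}M)-F(s^{-1}A)\bigr]=\mathcal S_{s^{-1}A,\,ts}F(M).
\]
As $A$ ranges over $(T_sF)^{-1}(0)$, the center $s^{-1}A$ ranges over $F^{-1}(0)$. Hence the generating set of $\mathcal K_R^{(2)}(T_sF)$ equals that of $\mathcal K_{sR}^{(2)}(F)$, so $\mathcal K_R^{(2)}(T_sF)=\mathcal K_{sR}^{(2)}(F)$. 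Intersecting over $R>0$ yields the equality.

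The only delicate point is the covariance step. The center $B$ is a zero of the limit $H$, not of the approximants $H_j$, so the scalar zero-level correction must be applied to $H_j$ before composing. One must also verify that the corrected centers $A_j+t_j^{-1}B_j$ lie on $F^{-1}(0)$ and that the tail parameter $st_j$ stays at least $sR$. Both checks are immediate, and the remaining steps are routine.
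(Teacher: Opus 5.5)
Your proof is correct and follows the paper's argument step for step. It uses the nested compact tails for nonemptiness and compactness, the zero-level correction $B_j=B+\tau_B(H_j)I$ together with the composition law and joint continuity for the covariance, and the identity $\mathcal S_{A,t}(T_sF)=\mathcal S_{s^{-1}A,ts}F$ for amplitude invariance; it is slightly more explicit in places, e.g.\ the exact identity $\mathcal K_R^{(2)}(T_sF)=\mathcal K_{sR}^{(2)}(F)$ and the check $F(A_j+t_j^{-1}B_j)=t_j^{-1}H_j(B_j)=0$.
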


\begin{proof}
The sets $\mathcal K_R^{(2)}(F)$ are nonempty compact subsets of the compact
operator space $\mathcal E_{n,\lambda,\Lambda}$ and decrease as $R$
increases.  Their intersection is therefore nonempty and compact.

To prove \eqref{eq:centered-tail-covariance}, first let
\[
 H_j=\mathcal S_{A_j,t_j}F\longrightarrow H,
 \qquad F(A_j)=0,\qquad t_j\ge R.
\]
If $H(B)=0$, set $B_j\coloneqq B+\tau_B(H_j)I$.  By
\Cref{lem:zero-level-correction},
\[
 H_j(B_j)=0,
 \qquad B_j\longrightarrow B.
\]
The composition law \eqref{eq:centered-composition} gives
\begin{equation}
 \mathcal S_{B_j,s}H_j
 =\mathcal S_{A_j+t_j^{-1}B_j,st_j}F.
 \label{eq:centered-tail-composition}
\end{equation}
Since $H_j(B_j)=0$, the new center satisfies
$F(A_j+t_j^{-1}B_j)=F(A_j)=0$.  The right-hand side of
\eqref{eq:centered-tail-composition} thus belongs to the orbit defining
$\mathcal K_{sR}^{(2)}(F)$.  Joint continuity of the centered action and
$B_j\to B$ prove \eqref{eq:centered-tail-covariance}.  Applying this with
$R$ replaced by $R/s$ yields centered invariance of the intersection.

Finally, $T_sF=\mathcal S_{0,s}F$, and
\[
 \mathcal S_{A,t}(T_sF)
 =\mathcal S_{s^{-1}A,ts}F,
 \qquad
 (T_sF)(A)=0\Longleftrightarrow F(s^{-1}A)=0.
\]
Letting $t\to\infty$ proves
\eqref{eq:tangent-hull-amplitude-invariance}.
\end{proof}

The direction $t\to\infty$ is forced by the normalization. If $P$ is the
quadratic Taylor polynomial at $x_0$, then
$a(r)\coloneqq\|u-P\|_{L^\infty(B_r(x_0))}=o(r^2)$ and the rescaled error is
governed by $\mathcal S_{D^2P,r^2/a(r)}F$. Thus $r^2/a(r)\to\infty$
whenever $a(r)>0$; if $a(r)=0$ for some $r$, the solution already agrees
with $P$ in a neighborhood. The tangent hull must also be formed directly
from $F$. Taking a one-sided tail only
after forming the full centered hull would recover the full hull, since its
backward renormalizations can cancel any prescribed lower bound on $t$.

\begin{proposition}
\label{prop:tangent-hull-flat-solutions}
Suppose that, for some $0<\alpha\le1$ and $C_0<\infty$, every
$H\in\mathfrak T_\infty^{(2)}(F)$ and every
$w\in\mathcal H_H(B_1)$ satisfy
\begin{equation}
 \|Dw\|_{L^\infty(B_{1/2})}
 +\|D^2w\|_{L^\infty(B_{1/2})}
 +[D^2w]_{C^{0,\alpha}(B_{1/2})}
 \le C_0\osc_{B_1}w.
 \label{eq:tangent-hull-uniform-estimate}
\end{equation}
Then, for every $0<\beta<\alpha$, there are
$\delta_\beta>0$ and $C_\beta<\infty$ with the following property.  If
$u\in\mathcal H_F(B_R(x_0))$ and $P$ is a quadratic polynomial satisfying
$F(D^2P)=0$ and
\begin{equation}
 \|u-P\|_{L^\infty(B_R(x_0))}\le\delta R^2,
 \qquad 0<\delta\le\delta_\beta,
 \label{eq:flat-solution-hypothesis}
\end{equation}
then $u\in C^{2,\beta}(B_{R/2}(x_0))$ and
\begin{equation}
 \|D^2u-D^2P\|_{L^\infty(B_{R/2}(x_0))}
 +R^\beta[D^2u]_{C^{0,\beta}(B_{R/2}(x_0))}
 \le C_\beta\delta.
 \label{eq:flat-solution-estimate}
\end{equation}
\end{proposition}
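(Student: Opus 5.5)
We run the quadratic improvement-of-flatness scheme of \Cref{thm:second-order-banach-steinhaus}. The difference is that operators are now only approximately in the tangent hull, with the approximation improving as $\delta\to0$. After translation and the rescaling $x\mapsto x_0+Rx$ (which replaces $F$ by $T_{R^2}F$, harmless by \eqref{eq:tangent-hull-amplitude-invariance}), we may take $x_0=0$ and $R=1$. We then work with $v=(u-P)/\delta$, which solves $\mathcal S_{A,\delta^{-1}}F(D^2v)=0$ with $A=D^2P$, $F(A)=0$, and $\|v\|_{L^\infty(B_1)}\le1$. The centered parameter is $t=\delta^{-1}\ge\delta_\beta^{-1}$, so the operator lies in $\mathcal K^{(2)}_{1/\delta_\beta}(F)$.

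\textbf{Step 1: approximation lemma.} For fixed $\beta<\alpha$ and $\rho\in(0,1/4)$ chosen so that $C_0'\rho^{\alpha-\beta}\le1/4$, we claim there is $t_0$ with the following property. For every $H=\mathcal S_{A',t}F$ with $F(A')=0$ and $t\ge t_0$, and every $z\in\mathcal H_H(B_1)$ with $\|z\|\le1$, there is a quadratic $q$ with $H(D^2q)=0$, coefficients bounded by $K$, and $\|z-q\|_{L^\infty(B_\rho)}\le\frac12\rho^{2+\beta}$. We argue by contradiction. If the claim fails along $t_j\to\infty$, then by \Cref{lem:pair-compactness} (applied in $\mathcal E_{n,\lambda,\Lambda}$) a subsequence gives $H_j\to H_\infty\in\mathfrak T^{(2)}_\infty(F)$ and $z_j\to w$ uniformly on $\overline B_{1/2}$. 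The limit $w$ solves $H_\infty(D^2w)=0$ in $B_{1/2}$, and after rescaling \eqref{eq:tangent-hull-uniform-estimate} its Taylor polynomial $\pi$ at $0$ satisfies $\|w-\pi\|_{L^\infty(B_\rho)}\le C_0'\rho^{2+\alpha}\le\frac14\rho^{2+\beta}$, with coefficients bounded by $C$. We correct $D^2\pi$ by $\tau_{D^2\pi}(H_j)I\to0$ as in \Cref{lem:zero-level-correction} to obtain $q_j$ with $H_j(D^2q_j)=0$. For large $j$, $\|z_j-q_j\|_{L^\infty(B_\rho)}<\frac12\rho^{2+\beta}$, a contradiction.

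\textbf{Step 2: iteration.} Set $\delta_\beta=1/t_0$. Define $v_k(x)=(v(\rho^kx)-P_k(\rho^kx))/\rho^{k(2+\beta)}$ with $P_0=0$. As in \Cref{thm:second-order-banach-steinhaus}, $v_k$ solves $\mathcal S_{A+\delta D^2P_k,\ \delta^{-1}\rho^{-k\beta}}F$, and $F(A+\delta D^2P_k)=0$ is preserved by the composition law \eqref{eq:centered-composition} together with $H_k(D^2q_k)=0$. The key point is that the parameter $\delta^{-1}\rho^{-k\beta}\ge t_0$ only increases, so Step 1 applies at every stage; here the one-sided hull is exactly what is needed. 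Summing the increments, as in the earlier proofs, gives a quadratic $P_\infty$ with $|P_\infty|\le C$ and $\|v-P_\infty\|_{L^\infty(B_r)}\le Cr^{2+\beta}$.

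\textbf{Step 3: recentering and conclusion.} We repeat Steps 1--2 centered at each $y\in B_{1/2}$, applied to $v(y+\frac14\cdot)$ minus its value. The relevant operator is $T_{1/16}$ of the above, and since $\frac1{16}\delta^{-1}\ge t_0$ if we shrink $\delta_\beta$ by a factor $16$, the iteration still applies. The quadratic-comparison argument with \Cref{lem:quadratic-coefficients} then gives $\|D^2v\|_{L^\infty(B_{1/2})}+[D^2v]_{C^{0,\beta}(B_{1/2})}\le C$. Multiplying by $\delta$ yields \eqref{eq:flat-solution-estimate}, since $D^2u-D^2P=\delta D^2v$.

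The main obstacle is Step 1. It requires that the compactness limit lands in $\mathfrak T^{(2)}_\infty(F)$, which it does because $t_j\to\infty$ and the centers lie on $F^{-1}(0)$. It also requires that the bounded-coefficient quadratic can be placed back on the zero level of the approximating operator. Uniformity over $t\ge t_0$ is exactly what the contradiction argument provides.
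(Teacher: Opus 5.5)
Your proposal is correct and follows the paper's proof essentially step for step: a contradiction-and-compactness argument giving a uniform one-step quadratic improvement for all $\mathcal S_{A,t}F$ with $F(A)=0$ and $t$ beyond a threshold (limits land in $\mathfrak T^{(2)}_\infty(F)$, and the Taylor polynomial is moved back onto the zero level by $\tau$), then a fixed-radius iteration along which the centered parameter only increases, then recentering and jet comparison. There are two harmless bookkeeping slips. First, the reduction to $R=1$ should use $R^{-2}u(x_0+R\,\cdot)$, which leaves $F$ itself unchanged; amplitude invariance of the hull is not what keeps the threshold independent of $R$. Second, $v(y+\tfrac14\cdot)-v(y)$ has sup norm up to $2$, so you must divide by $2$ and require $t/32\ge t_0$, as the paper does.
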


\begin{proof}
We first obtain a quadratic improvement uniformly in the deep centered tail.
Choose $\rho\in(0,1/4)$ so small that the Taylor remainder supplied by
\eqref{eq:tangent-hull-uniform-estimate} is at most
$\frac14\rho^{2+\beta}$.  We claim that there exists $R_0<\infty$ such that,
whenever
\[
 G=\mathcal S_{A,t}F,\qquad F(A)=0,\qquad t\ge R_0,
\]
and $v\in\mathcal H_G(B_1)$ satisfies $\|v\|_{L^\infty(B_1)}\le1$, there is
a quadratic polynomial $q$ such that
\begin{equation}
 G(D^2q)=0,\qquad
 \|v-q\|_{L^\infty(B_\rho)}\le\rho^{2+\beta},
 \qquad \|q\|_{L^\infty(B_1)}\le C.
 \label{eq:deep-tail-one-step}
\end{equation}

Otherwise there are bad pairs $(G_j,v_j)$ with tail parameters
$t_j\to\infty$.  Operator compactness, interior H\"older estimates, and
viscosity stability give, after passage to a subsequence,
\[
 G_j\longrightarrow H\in\mathfrak T_\infty^{(2)}(F),
 \qquad v_j\longrightarrow v,
 \qquad H(D^2v)=0.
\]
Let $q$ be the quadratic Taylor polynomial of $v$ at the origin.  By
\eqref{eq:tangent-hull-uniform-estimate}, the choice of $\rho$, and
\Cref{lem:quadratic-compatibility},
\[
 H(D^2q)=0,\qquad
 \|v-q\|_{L^\infty(B_\rho)}
 \le\tfrac14\rho^{2+\beta}.
\]
Correct $D^2q$ by the scalar
$\tau_{D^2q}(G_j)I$ and retain the constant and linear coefficients of $q$.
The corrected polynomials $q_j$ converge to $q$ and satisfy
$G_j(D^2q_j)=0$.  Uniform convergence then gives
\eqref{eq:deep-tail-one-step} for large $j$, the desired contradiction.

The estimate for normalized solutions of every sufficiently deep tail
operator now follows by the fixed-radius quadratic iteration used in the
proof of \Cref{thm:second-order-banach-steinhaus}.  Indeed, after one step the
normalized residual is governed by
\[
 \mathcal S_{D^2q,\rho^{-\beta}}G
 =\mathcal S_{A+t^{-1}D^2q,t\rho^{-\beta}}F.
\]
The new center remains on $F^{-1}(0)$ because $G(D^2q)=0$, while the tail
parameter increases.  Thus every iterate remains in the range where
\eqref{eq:deep-tail-one-step} is valid.  Set $R_1\coloneqq32R_0$.  For an initial
parameter $t\ge R_1$, the interior estimate follows as well.  Indeed, for
$x\in B_{1/2}$ apply the
pointwise iteration to
\[
 w_x(y)\coloneqq\frac{v(x+\tfrac14y)-v(x)}2,
 \qquad y\in B_1.
\]
This is normalized and is governed by
$\mathcal S_{0,1/32}G=\mathcal S_{A,t/32}F$.  Thus $t\ge R_1$ keeps the
localized equation in the controlled tail.  Returning to the original
variables and comparing the quadratic jets at two nearby centers exactly as
in the proof of \Cref{thm:qualitative-c2-self-improvement} gives both the
Hölder estimate for the quadratic coefficients and their compatibility with
the linear jets.  Consequently,
\begin{equation}
 \|Dv\|_{L^\infty(B_{1/2})}
 +\|D^2v\|_{L^\infty(B_{1/2})}
 +[D^2v]_{C^{0,\beta}(B_{1/2})}
 \le C_\beta
 \label{eq:deep-tail-c2beta}
\end{equation}
for every normalized solution of $\mathcal S_{A,t}F$ with $t\ge R_1$.

Return to \eqref{eq:flat-solution-hypothesis}, first with $R=1$ and $x_0=0$, and put $A=D^2P$. If $a\coloneqq\|u-P\|_{L^\infty(B_1)}>0$, then
\[
    v\coloneqq\frac{u-P}{a}
\]
is normalized and solves the equation for $\mathcal S_{A,a^{-1}}F$. Taking $\delta_\beta\le R_1^{-1}$ and applying
\eqref{eq:deep-tail-c2beta} gives \eqref{eq:flat-solution-estimate}, with $a$ in place of $\delta$ and hence also with $\delta$. The case $a=0$ is immediate. Translation and scaling prove the general statement.
\end{proof}

\subsection{From flatness to partial regularity}

We recall the universal input from partial regularity.  For
$u\in C(B_1)$ and $x\in B_1$, let $\Psi(u,B_1)(x)$ be the infimum of the
numbers $K\ge0$ for which there are $p\in\Rn$ and $A\in\Sn$ satisfying
\begin{equation}
 \left|u(y)-u(x)-p\cdot(y-x)
 -\frac12(y-x)\cdot A(y-x)\right|
 \le\frac K6|y-x|^3
 \quad(y\in B_1).
 \label{eq:cubic-contact-modulus}
\end{equation}
Armstrong--Silvestre--Smart prove that there are
$C<\infty$ and $\varepsilon=\varepsilon(n,\lambda,\Lambda)>0$ such that,
for every $G\in\mathcal E_{n,\lambda,\Lambda}$ and every
$u\in\mathcal H_G(B_1)$ with $\|u\|_{L^\infty(B_1)}\le1$,
\begin{equation}
 \left|\left\{x\in B_{1/2}:
 \Psi(u,B_1)(x)>t\right\}\right|
 \le Ct^{-\varepsilon}
 \qquad(t>1).
 \label{eq:universal-w3epsilon}
\end{equation}
This is the viscosity $W^{3,\varepsilon}$ estimate. Although
\cite[Lemma~5.2]{ArmstrongSilvestreSmart2012} is stated under that paper's
standing assumptions, its proof does not use differentiability of the
operator.  Indeed, the universal interior $C^{1,\alpha}$ estimate first gives
$u\in C^1$.  Difference quotients of $u$ in any fixed direction satisfy the
two Pucci inequalities by
\cite[Proposition~5.5]{CaffarelliCabre1995}; passing to the directional
derivative by viscosity stability preserves those inequalities.  Applying
the universal $W^{2,\varepsilon}$ estimate to the coordinate derivatives and
then \cite[Lemma~5.1]{ArmstrongSilvestreSmart2012} gives
\eqref{eq:universal-w3epsilon}.  Thus this input requires only uniform
ellipticity. Differentiability of the operator enters the original partial
regularity argument through the flat-solution theorem, which
\Cref{prop:tangent-hull-flat-solutions} replaces.

\begin{theorem}[Twice differentiability points are regular]
\label{thm:tangent-hull-partial-regularity}
Let \(F\in\mathcal E_{n,\lambda,\Lambda}\).  Assume every local solution of
every equation in \(\mathfrak T_\infty^{(2)}(F)\) is twice differentiable at
every point.  Then there is
\(\alpha_{\mathrm{tan}}=\alpha_{\mathrm{tan}}(\mathfrak T_\infty^{(2)}(F))>0\) with
the following property.  If \(u\in\mathcal H_F(\Omega)\), then
\begin{equation}
 \Reg(u)\coloneqq\{x\in\Omega:u\text{ is twice differentiable at }x\}
 \label{eq:regular-set-definition}
\end{equation}
is open and
\begin{equation}
 u\in C^{2,\beta}_{\mathrm{loc}}(\Reg(u))
 \qquad(0<\beta<\alpha_{\mathrm{tan}}).
 \label{eq:tangent-hull-regular-set}
\end{equation}
Consequently,
\[
 \Sing(u)\coloneqq\Omega\setminus\Reg(u)
\]
is exactly the set of points having no neighborhood on which \(u\) is
\(C^2\).  It is relatively closed, and
\begin{equation}
 \mathcal H^{n-\varepsilon}(\Sing(u)\cap K)<\infty
 \qquad(K\Subset\Omega),
 \label{eq:tangent-hull-partial-regularity}
\end{equation}
where \(\varepsilon=\varepsilon(n,\lambda,\Lambda)>0\) is universal.

If, more strongly, all tangent-equation solutions belong to
\(C^{2,\gamma}_{\mathrm{loc}}\), then \eqref{eq:tangent-hull-regular-set} holds
for every \(\beta<\gamma\).
\end{theorem}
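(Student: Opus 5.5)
The plan has four steps: turn the qualitative hypothesis on the tangent hull into a uniform estimate, feed that estimate into \Cref{prop:tangent-hull-flat-solutions}, and then obtain the measure bound on the singular set from \eqref{eq:universal-w3epsilon}.

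\emph{Step 1: a uniform estimate on the tangent hull.} By \Cref{prop:centered-tangent-hull}, the family $\mathfrak T^{(2)}_\infty(F)$ is nonempty, compact and satisfies the centered invariance \eqref{eq:centered-forward-invariance}. Under our hypothesis, every solution of every equation in it is twice differentiable everywhere. So \Cref{thm:qualitative-c2-self-improvement} applies to this family. It yields $\alpha_{\mathrm{tan}}\coloneqq\alpha_*(\mathfrak T^{(2)}_\infty(F))>0$ and a constant $C_0$ for which \eqref{eq:tangent-hull-uniform-estimate} holds with $\alpha=\alpha_{\mathrm{tan}}$. Under the stronger $C^{2,\gamma}$ hypothesis, we use \Cref{thm:second-order-banach-steinhaus} instead, which gives \eqref{eq:tangent-hull-uniform-estimate} for every exponent $\alpha<\gamma$. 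Taking $\alpha$ strictly between $\beta$ and $\gamma$ then covers every $\beta<\gamma$. In either case, \Cref{prop:tangent-hull-flat-solutions} now provides $\delta_\beta$ and $C_\beta$ for each $\beta<\alpha$.

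\emph{Step 2: $\Reg(u)$ is open and $u$ is $C^{2,\beta}$ there.} Let $x_0\in\Reg(u)$ and let $P$ be the quadratic Taylor polynomial of $u$ at $x_0$. By \Cref{lem:quadratic-compatibility}, $F(D^2P)=0$. Since $\|u-P\|_{L^\infty(B_R(x_0))}=o(R^2)$, we can pick $R$ small enough that $B_R(x_0)\Subset\Omega$ and \eqref{eq:flat-solution-hypothesis} holds with $\delta\le\delta_\beta$. \Cref{prop:tangent-hull-flat-solutions} then gives $u\in C^{2,\beta}(B_{R/2}(x_0))$, so every point of this ball is a point of twice differentiability and the ball lies in $\Reg(u)$. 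Hence $\Reg(u)$ is open, and the bound holds on $B_{R/2}(x_0)$. Covering compact subsets of $\Reg(u)$ by finitely many such balls proves \eqref{eq:tangent-hull-regular-set}. The characterization of $\Sing(u)$ follows at once: a $C^2$ neighborhood makes $u$ twice differentiable, and conversely Step 2 produces such a neighborhood. Relative closedness is just the openness of $\Reg(u)$.

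\emph{Step 3: the Hausdorff measure bound.} This is the main obstacle. The key observation is that, for a fixed $\beta$, flatness at a \emph{uniform} threshold is guaranteed by a cubic contact bound. Fix $K\Subset\Omega$, and after covering and rescaling assume $u\in\mathcal H_F(B_1)$, $\|u\|_{L^\infty(B_1)}\le1$, $K\subset B_{1/2}$. Suppose $\Psi(u,B_1)(x)\le t$ at a point $x$. Then there is a polynomial $Q$ with $|u-Q|\le \frac t6 r^3$ on $B_r(x)$. We must make $D^2Q$ lie on the zero level. Replacing $D^2Q$ by $D^2Q+\tau I$ with $F(D^2Q+\tau I)=0$ works, and \Cref{lem:zero-level-correction} bounds $|\tau|$ once we know $|F(D^2Q)|\lesssim tr$. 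That bound follows from a viscosity comparison at scale $r$: touch $u$ by $Q\pm$ a multiple of $tr|y-x|^2$ plus a constant on $B_r(x)$. Choose $r=c\,\delta_\beta/t$. Then $\|u-\tilde Q\|_{L^\infty(B_r(x))}\le \delta_\beta r^2$, so $x\in\Reg(u)$.

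\emph{Step 4: counting.} For $x\in\Sing(u)\cap B_{1/2}$ and any scale $r$, the set $\{\Psi>c/r\}$ must have density bounded below in $B_{r}(x)$. This needs a Vitali-type argument, but the estimate applied above at a single point is not enough, because we need density. So instead we rescale to $B_r(x)$. The rescaled solutions belong to $\mathcal H_{T_sF}$, and $\mathfrak T^{(2)}_\infty(T_sF)=\mathfrak T^{(2)}_\infty(F)$ by \eqref{eq:tangent-hull-amplitude-invariance}, so the constants are unchanged. Arguing as in \cite{ArmstrongSilvestreSmart2012}: if the measure of $\{\Psi>c/r\}\cap B_{r}(x)$ were below $\mu r^n$ for a small fixed $\mu$, then a point of small $\Psi$ near $x$ would make $u$ flat on a ball containing $x$, contradicting $x\in\Sing$. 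Hence $|\{\Psi>c/r\}\cap B_r(x)|\ge\mu r^n$. Cover $\Sing(u)\cap B_{1/2}$ by disjoint Vitali balls $B_r(x_i)$. Combining the density bound with \eqref{eq:universal-w3epsilon} at $t=c/r$ bounds their number by $Cr^{\varepsilon-n}$. This gives \eqref{eq:tangent-hull-partial-regularity}.
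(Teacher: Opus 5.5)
Your proposal is correct and follows the paper's proof. It uses the uniform estimate on the compact, centered-invariant tangent hull from \Cref{thm:qualitative-c2-self-improvement} (or \Cref{thm:second-order-banach-steinhaus}), the flat-solution proposition triggered at Taylor points, and a maximal disjoint family of $r$-balls counted against \eqref{eq:universal-w3epsilon}. Two detours can be dropped: a finite cubic contact constant at $z$ already makes $Q$ the Taylor polynomial of $u$ at $z$, so \Cref{lem:quadratic-compatibility} gives $F(D^2Q)=0$ with no zero-level correction, and your single-point trigger already shows that, for $x\in\Sing(u)$, \emph{every} point of $B_r(x)$ lies in $\{\Psi\ge c/r\}$, so no density or rescaling argument is needed (which is just as well, since $\delta_\beta$ depends on $F$ itself and not only on $\mathfrak T^{(2)}_\infty(F)$, so it is not unchanged under amplitude rescaling).
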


\begin{proof}
\Cref{prop:centered-tangent-hull} shows that the tangent family is
compact and centered-invariant. \Cref{thm:qualitative-c2-self-improvement}
therefore gives a uniform \(C^{2,\alpha_{\mathrm{tan}}}\) estimate on that family.
\Cref{prop:tangent-hull-flat-solutions} then supplies, for each
\(\beta<\alpha_{\mathrm{tan}}\), numbers \(\delta_\beta>0\) and \(C_\beta\)
such that
\begin{equation}
 \|u-P\|_{L^\infty(B_r(x))}\le\delta_\beta r^2,\qquad
 F(D^2P)=0,
 \label{eq:epsilon-regularity-trigger}
\end{equation}
implies \(u\in C^{2,\beta}(B_{r/2}(x))\).

We first identify the regular set.  Suppose \(u\) is twice differentiable at
\(x\), and let \(P_x\) be its quadratic Taylor polynomial. By
\Cref{lem:quadratic-compatibility}, \(F(D^2P_x)=0\), while
\[
 a(r)\coloneqq\|u-P_x\|_{L^\infty(B_r(x))}=o(r^2).
\]
For sufficiently small \(r\), condition \eqref{eq:epsilon-regularity-trigger}
holds.  Hence \(u\) is \(C^{2,\beta}\) in a neighborhood of \(x\).  The
converse is immediate.  Taking one fixed
\(\beta_0\in(0,\alpha_{\mathrm{tan}})\) proves that \(\Reg(u)\) is open and that
its complement agrees with the classical singular set.  Repeating the same
argument at each point, with any other \(\beta<\alpha_{\mathrm{tan}}\), proves
\eqref{eq:tangent-hull-regular-set} on $\Reg(u)$.

It remains to estimate its complement.  Normalize first in \(B_1\), with
\(\|u\|_{L^\infty(B_1)}\le1\).  The flatness theorem gives
\(\delta_0>0\) with the following consequence.  If
\(y\in\overline B_{1/4}\), \(0<r<1/16\), and some \(z\in B_r(y)\) satisfies
\begin{equation}
 \Psi(u,B_1)(z)<\delta_0r^{-1},
 \label{eq:good-cubic-contact}
\end{equation}
then \(y\in\Reg(u)\).  Indeed, an admissible quadratic polynomial \(P_z\)
in \eqref{eq:cubic-contact-modulus}, with constant smaller than
\(\delta_0r^{-1}\), satisfies
\(F(D^2P_z)=0\) and
\[
 \|u-P_z\|_{L^\infty(B_{4r}(z))}
 \le C\delta_0r^2.
\]
After decreasing \(\delta_0\) by a universal factor, the flatness theorem
applies in \(B_{4r}(z)\) and gives regularity on \(B_{2r}(z)\), which
contains \(y\).

Choose a maximal disjoint family
\(\{B_r(y_i)\}\), with
\(y_i\in\Sing(u)\cap\overline B_{1/4}\), where
$0<r<\min\{1/16,\delta_0/2\}$. The balls \(B_{3r}(y_i)\) cover
this set.  By the contrapositive of \eqref{eq:good-cubic-contact},
\[
 \bigcup_iB_r(y_i)
 \subset\{\Psi(u,B_1)\ge\delta_0r^{-1}\}
 \subset\{\Psi(u,B_1)>\tfrac12\delta_0r^{-1}\}.
\]
The universal estimate \eqref{eq:universal-w3epsilon} gives
\[
 \#\{i\}\,r^n\le C_{\delta_0}r^\varepsilon.
\]
Thus \(\sum_i(3r)^{n-\varepsilon}\le C\), uniformly as \(r\downarrow0\),
so the \((n-\varepsilon)\)-dimensional Hausdorff contents at scale \(6r\)
are uniformly bounded.  Letting \(r\downarrow0\) proves the local Hausdorff
estimate in the normalized ball.  Covering,
spatial rescaling, and amplitude normalization give
\eqref{eq:tangent-hull-partial-regularity} in a general domain.
The tangent family is unchanged by amplitude normalization, by
\eqref{eq:tangent-hull-amplitude-invariance}.

Under the stronger \(C^{2,\gamma}\) hypothesis, apply
\Cref{thm:second-order-banach-steinhaus} on the tangent family with an
intermediate exponent between \(\beta\) and \(\gamma\).  The same proof
then gives every \(\beta<\gamma\).
\end{proof}

\begin{remark}
The same covering argument gives, for \(K\Subset\Omega\) and all sufficiently
small \(r\),
\[
 \bigl|\{x:\dist(x,\Sing(u)\cap K)<r\}\bigr|
 \le C_{K,u,F}r^\varepsilon.
\]
Thus every compact part of the singular set has upper Minkowski dimension at
most \(n-\varepsilon\).
\end{remark}

\begin{corollary}
\label{cor:linear-tangent-equations}
Let $F\in\mathcal E_{n,\lambda,\Lambda}$. Assume every equation represented by an operator in
\(\mathfrak T_\infty^{(2)}(F)\) is equivalent to a constant-coefficient
linear uniformly elliptic equation.  Then the conclusions of
\Cref{thm:tangent-hull-partial-regularity} hold for every
\(0<\beta<1\).
\end{corollary}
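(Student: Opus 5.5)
The plan is to verify the hypothesis of \Cref{thm:tangent-hull-partial-regularity} in its strong form: every local solution of every tangent equation is \(C^{2,\gamma}_{\mathrm{loc}}\) for every \(\gamma<1\). The final clause of that theorem then gives \eqref{eq:tangent-hull-regular-set} for every \(\beta<1\). The remaining conclusions (openness of \(\Reg(u)\), relative closedness of \(\Sing(u)\), and the \(\mathcal H^{n-\varepsilon}\) bound) do not depend on the exponent and carry over verbatim.

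We fix \(H\in\mathfrak T_\infty^{(2)}(F)\). By hypothesis there is a constant matrix \(L\), and we may take \(\lambda I\le L\le\Lambda I\), such that the equation \(H(D^2w)=0\) is equivalent to \(\operatorname{tr}(LD^2w)=0\). Here ``equivalent'' means that the two equations have the same viscosity solutions on every open set. We must interpret this in the viscosity sense, since \(H\) need not equal \(L\) or any multiple of it. The conclusion we need is that \(\mathcal H_H(B)=\mathcal H_{L}(B)\) for every ball \(B\).

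Next, viscosity solutions of a constant-coefficient uniformly elliptic linear equation are classical solutions. To see this, mollify: \(w*\varphi_\epsilon\) is again a viscosity solution, by translation invariance and stability under averaging, which one checks directly or via the uniqueness of solutions to the Dirichlet problem. Alternatively, solve the Dirichlet problem classically on a smaller ball with boundary data \(w\) and use the comparison principle to identify the two solutions. After the linear change of variables \(x\mapsto L^{-1/2}x\), the operator becomes the Laplacian, so \(w\) is harmonic in an ellipsoid and hence real analytic. In particular \(w\in C^{2,1}_{\mathrm{loc}}\subset C^{2,\gamma}_{\mathrm{loc}}\) for every \(\gamma\le 1\).

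Then every local solution of every tangent equation lies in \(C^{2,\gamma}_{\mathrm{loc}}\) for all \(\gamma<1\), and hence is twice differentiable everywhere. So \Cref{thm:tangent-hull-partial-regularity} applies; it uses the compactness and centered invariance of the hull from \Cref{prop:centered-tangent-hull}. Given \(\beta<1\), choose \(\gamma\in(\beta,1)\) and invoke the strengthened clause, which internally applies \Cref{thm:second-order-banach-steinhaus} on the tangent family. This gives \eqref{eq:tangent-hull-regular-set} for that \(\beta\).

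The only delicate point is the second step, identifying viscosity solutions of the linear equation with classical harmonic-type functions. The uniqueness/comparison route settles it cleanly, and the uniform constants play no role there, since the theorem only needs qualitative smoothness.
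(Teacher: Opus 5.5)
Your proposal is correct and follows the paper's own argument. The paper notes that solutions of constant-coefficient linear uniformly elliptic equations are $C^{2,\gamma}_{\mathrm{loc}}$ for every $\gamma<1$, and then invokes the strengthened clause of \Cref{thm:tangent-hull-partial-regularity}, exactly as you do. Your extra detail fills in what the paper leaves implicit: you read ``equivalent'' as equality of viscosity solution sets, and you identify viscosity solutions with classical ones by comparison with the classical Dirichlet solution. The normalization $\lambda I\le L\le\Lambda I$ is harmless but not needed, since only qualitative smoothness enters.
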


\begin{proof}
Solutions of a constant-coefficient linear uniformly elliptic equation
satisfy interior $C^{2,\gamma}$ estimates for every $\gamma<1$. Apply
\Cref{thm:tangent-hull-partial-regularity}.
\end{proof}

The corollary concerns the equation \(F=0\), rather than the particular
function used to represent it.  To compare it with the criterion of
Armstrong--Silvestre--Smart, suppose that \(F\in C^1(\Sn)\) and that \(DF\)
has a global modulus of continuity \(\omega\).  Writing \(DF(A)[M]\) for the
derivative of \(F\) at \(A\) in the direction \(M\), the fundamental theorem
of calculus gives
\[
 \mathcal S_{A,t}F(M)-DF(A)[M]
 =
 \int_0^1
 \bigl(DF(A+\theta t^{-1}M)-DF(A)\bigr)[M]\,d\theta.
\]
Consequently,
\[
 \left|\mathcal S_{A,t}F(M)-DF(A)[M]\right|
 \le |M|\,\omega(t^{-1}|M|).
\]
The convergence to the linearization is therefore uniform in the center
\(A\) on bounded sets of matrices.  Since uniform ellipticity keeps the
coefficients \(DF(A)\) in a compact set, every centered tangent equation is
a constant-coefficient linear equation.

\Cref{cor:linear-tangent-equations} starts directly from this last
conclusion.  It requires neither differentiability of \(F\) nor a prescribed
rate of convergence to the tangent equations.  In fact, a tangent operator
need not itself be linear: it is enough that its zero equation agree with a
linear one. Thus a tangent operator already known to lie in
$\mathcal E_{n,\lambda,\Lambda}$ may have the form $\eta\circ L$, where
$L$ is linear and $\eta$ is strictly increasing with $\eta(0)=0$; no
differentiability of $\eta$ is required.

\section{Scalar finite-switching laws}
\label{sec:switching-equations}

We conclude with a class of nonsmooth equations for which the tangent
criterion can be checked explicitly. For $P\in\Sn$, write
\[
 L_P(M)\coloneqq\operatorname{tr}(PM)
\]
for the corresponding linear operator. For a finite switching law, the
tangent criterion can be read off directly. The original operator may pass
through several elliptic regimes, while a centered blow-up
can retain at most one switching level.  Its tangent equations are therefore
linear or have a single hinge; in the latter case they are convex or concave.

\begin{corollary}[Finite switching nonlinearities]
\label{cor:finite-switching}
Let \(A,B\in\Sn\), and let \(\phi:\mathbb R\to\mathbb R\) be continuous and
piecewise affine.  Suppose that its finitely many affine pieces have slopes
\(m_0,\ldots,m_k\), and that
\begin{equation}
 \lambda I\le A+m_iB\le\Lambda I
 \qquad(0\le i\le k).
 \label{eq:switching-ellipticity}
\end{equation}
Define
\begin{equation}
 F(M)\coloneqq L_A(M)+\phi\bigl(L_B(M)\bigr)-\phi(0).
 \label{eq:switching-operator}
\end{equation}
Then \(F\in\mathcal E_{n,\lambda,\Lambda}\).  Moreover, there are universal
numbers
\[
 \alpha_{\mathrm{EK}}=\alpha_{\mathrm{EK}}(n,\lambda,\Lambda)>0,
 \qquad
 \varepsilon=\varepsilon(n,\lambda,\Lambda)>0,
\]
with the following property.  If \(u\in\mathcal H_F(\Omega)\), then
\(\Reg(u)\) is open and
\[
 u\in C^{2,\beta}_{\mathrm{loc}}(\Reg(u))
 \qquad(0<\beta<\alpha_{\mathrm{EK}}).
\]
The complementary set \(\Sing(u)\) is relatively closed and
\[
 \mathcal H^{n-\varepsilon}(\Sing(u)\cap K)<\infty
 \qquad(K\Subset\Omega).
\]
\end{corollary}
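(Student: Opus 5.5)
The plan is to verify the hypothesis of \Cref{thm:tangent-hull-partial-regularity} in its quantitative form: every member of \(\mathfrak T_\infty^{(2)}(F)\) should define an equation equivalent to a linear constant-coefficient equation or to a convex or concave one, so Evans--Krylov applies. First, \(F\in\mathcal E_{n,\lambda,\Lambda}\). For \(N\ge0\) write \(F(M+N)-F(M)=L_A(N)+\phi(L_B(M)+L_B(N))-\phi(L_B(M))\). The last difference equals \(\bar m\,L_B(N)\), where \(\bar m\) is an average of the slopes \(m_i\) over the interval crossed, so \(\bar m\) is a convex combination of the \(m_i\). Hence the increment is \(\operatorname{tr}((A+\bar mB)N)\). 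Since \(A+\bar mB\) is a convex combination of the matrices \(A+m_iB\), \eqref{eq:switching-ellipticity} gives the ellipticity bounds, and the normalization \(F(0)=0\) is built in.

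Next, I would compute the centered renormalizations. With \(F(A_0)=0\) and \(s_0\coloneqq L_B(A_0)\),
\[
 \mathcal S_{A_0,t}F(M)=L_A(M)+\phi_t\bigl(L_B(M)\bigr),\qquad
 \phi_t(\sigma)\coloneqq t\bigl[\phi(s_0+t^{-1}\sigma)-\phi(s_0)\bigr].
\]
Here \(\phi_t\) is piecewise affine with the same slopes and breakpoints \(t(\sigma_i-s_0)\). Take a sequence with \(t_j\to\infty\), \(F(A_j)=0\), and \(\mathcal S_{A_j,t_j}F\to H\), and pass to a subsequence along which every rescaled breakpoint \(t_j(\sigma_i-s_j)\) converges in \([-\infty,\infty]\). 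Breakpoints are separated by fixed positive gaps, so these gaps become infinite after multiplying by \(t_j\), and at most one rescaled breakpoint can stay bounded. The functions \(\phi_{t_j}\) are uniformly Lipschitz and vanish at \(0\), so they converge locally uniformly to some \(\psi\). If no breakpoint stays bounded, \(\psi(\sigma)=m\sigma\) and \(H=L_{A+mB}\) is linear. If exactly one breakpoint \(\sigma_i\) stays bounded, with limit \(c\), then \(\psi\) is affine with slope \(m_{i-1}\) for \(\sigma<c\) and \(m_i\) for \(\sigma>c\), and it is continuous. This gives \(\psi=\max\) or \(\min\) of two affine functions of \(\sigma\). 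Consequently \(H(M)=\max\) or \(\min\) of two affine functions of \(M\), namely \(L_{A+m_{i-1}B}(M)+\kappa\) and \(L_{A+m_iB}(M)+\kappa'\). Such an \(H\) is convex or concave, uniformly elliptic with constants \(\lambda,\Lambda\), and translation invariant in \(x\).

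Then I would apply Evans--Krylov to these limits. For each \(H\in\mathfrak T_\infty^{(2)}(F)\), solutions of \(H(D^2w)=0\) are \(C^{2,\alpha_{\mathrm{EK}}}_{\mathrm{loc}}\) with \(\alpha_{\mathrm{EK}}=\alpha_{\mathrm{EK}}(n,\lambda,\Lambda)\). This uses the classical interior estimate for convex or concave uniformly elliptic operators that need not be differentiable; Caffarelli--Cabr\'e's version needs only convexity and uniform ellipticity, with \(H(0)=0\). In the linear case the exponent can be arbitrary in \((0,1)\). So every tangent-equation solution lies in \(C^{2,\gamma}_{\mathrm{loc}}\) with \(\gamma=\alpha_{\mathrm{EK}}\), and \(\gamma\) depends only on \(n,\lambda,\Lambda\). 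The strengthened final clause of \Cref{thm:tangent-hull-partial-regularity} then gives \(\Reg(u)\) open and \(u\in C^{2,\beta}_{\mathrm{loc}}(\Reg(u))\) for all \(\beta<\alpha_{\mathrm{EK}}\). It also gives the closedness of \(\Sing(u)\) and the \(\mathcal H^{n-\varepsilon}\) bound with the universal \(\varepsilon\) from \eqref{eq:universal-w3epsilon}.

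I expect the main obstacle to be the tangent classification, in particular the claim that at most one switching level survives. It needs the bookkeeping of the rescaled breakpoints \(t_j(\sigma_i-s_j)\), together with a check that the constants \(\kappa,\kappa'\) are finite and that the limit is genuinely convex or concave rather than merely piecewise linear. That the one-hinge limit is a max or a min follows from the sign of \(m_i-m_{i-1}\), which is automatic for two pieces.
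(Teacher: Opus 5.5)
Your proposal is correct and follows the paper's proof: the secant-slope convex-combination check of ellipticity, then the reduction of every centered tangent to a linear operator or a single convex/concave hinge, then the universal Evans--Krylov exponent fed into the $C^{2,\gamma}$ clause of \Cref{thm:tangent-hull-partial-regularity}. Tracking all rescaled breakpoints $t_j(\sigma_i-s_j)$ in $[-\infty,\infty]$ is only a repackaging of the paper's case split on whether $t_j\dist(s_j,\mathcal C)\to\infty$ or $t_j(s_j-c)\to q$ for a single breakpoint $c$.
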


\begin{proof}
We first verify ellipticity.  Let \(N\ge0\).  If \(L_B(N)\ne0\), the secant
slope
\[
 \sigma\coloneqq
 \frac{\phi(L_B(M+N))-\phi(L_B(M))}{L_B(N)}
\]
belongs to the convex hull of \(\{m_0,\ldots,m_k\}\).  If \(L_B(N)=0\),
the same calculation holds with any \(\sigma\) in this convex hull.  In
either case,
\[
 F(M+N)-F(M)=L_{A+\sigma B}(N).
\]
Condition~\eqref{eq:switching-ellipticity} is preserved under convex
combinations, and hence
\[
 \lambda\operatorname{tr}N
 \le F(M+N)-F(M)
 \le\Lambda\operatorname{tr}N.
\]
Since \(F(0)=0\), this proves that
\(F\in\mathcal E_{n,\lambda,\Lambda}\).

It remains to identify the centered tangents.  Suppose
\[
 \mathcal S_{Q_j,t_j}F\longrightarrow H,
 \qquad F(Q_j)=0,
 \qquad t_j\longrightarrow\infty,
\]
and set \(s_j\coloneqq L_B(Q_j)\).  Directly from the definition,
\begin{equation}
 \mathcal S_{Q_j,t_j}F(M)
 =L_A(M)+t_j\!\left[
   \phi\bigl(s_j+t_j^{-1}L_B(M)\bigr)-\phi(s_j)
 \right].
 \label{eq:switching-centered-profile}
\end{equation}
Let \(\mathcal C\) be the finite set of breakpoints of \(\phi\).  On a
bounded set of matrices, the last term samples \(\phi\) only within an
interval of length \(O(t_j^{-1})\) around \(s_j\).

Assume first that
\[
 t_j\dist(s_j,\mathcal C)\longrightarrow\infty.
\]
After passage to a subsequence, this interval lies in one affine piece of
\(\phi\), with fixed slope \(m_i\).  Formula
\eqref{eq:switching-centered-profile} then gives
\[
 H(M)=L_{A+m_iB}(M),
\]
so the tangent equation is linear.

In the remaining case, a subsequence approaches one breakpoint
\(c\in\mathcal C\) at the scale \(t_j^{-1}\).  More precisely, for some
\(q\in\mathbb R\),
\[
 t_j(s_j-c)\longrightarrow q.
\]
Let \(m_-\) and \(m_+\) be the slopes immediately to the left and right of
\(c\).  Near this point,
\[
 \phi(c+h)=\phi(c)+m_-h+(m_+-m_-)h_+.
\]
Substitution into \eqref{eq:switching-centered-profile} yields
\begin{equation}
 H(M)=L_{A+m_-B}(M)
 +(m_+-m_-)
 \left[\bigl(q+L_B(M)\bigr)_+-q_+\right].
 \label{eq:switching-tangent}
\end{equation}
Thus \(H\) is convex when \(m_+\ge m_-\), concave when
\(m_+\le m_-\), and linear when the two slopes agree.

Every tangent operator retains the ellipticity constants
\(\lambda,\Lambda\). The Evans--Krylov theorem
\cite{Evans1982,Krylov1982}, in the viscosity form of
\cite[Theorem~6.6]{CaffarelliCabre1995}, supplies the same exponent
\(\alpha_{\mathrm{EK}}(n,\lambda,\Lambda)\) for all of them. The conclusion now
follows from \Cref{thm:tangent-hull-partial-regularity}.
\end{proof}

\subsection{A four-state Isaacs equation}

The corollary applies to a simple model in every dimension \(n\ge2\).  Set
\[
 B\coloneqq\operatorname{diag}(1,-1,0,\ldots,0),
 \qquad
 \ell(M)\coloneqq L_B(M)=M_{11}-M_{22},
\]
and define
\[
 \psi(s)\coloneqq2s_+-4(s-1)_++(s-2)_+.
\]
Equivalently,
\begin{equation}
 \psi(s)=
 \begin{cases}
  0,     &s\le0,\\
  2s,    &0\le s\le1,\\
  4-2s,  &1\le s\le2,\\
  2-s,   &s\ge2.
 \end{cases}
 \label{eq:four-state-profile}
\end{equation}
For \(0<\mu<1/2\), let
\begin{equation}
 F_\mu(M)\coloneqq\operatorname{tr}M+\mu\psi\bigl(\ell(M)\bigr).
 \label{eq:four-operator-isaacs}
\end{equation}
The four successive slopes of \(\psi\) are
\[
 0,\qquad 2,\qquad -2,\qquad -1.
\]
Because this sequence is neither nondecreasing nor nonincreasing,
\(F_\mu\) is neither convex nor concave.
Accordingly, the coefficient matrices of the affine pieces of \(F_\mu\) are
\[
 I,\qquad I+2\mu B,\qquad I-2\mu B,\qquad I-\mu B.
\]
Their eigenvalues lie between \(1-2\mu\) and \(1+2\mu\).  These are the
ellipticity constants of \(F_\mu\). The elementary identity
\[
 \psi(s)=
 \min\bigl\{\max\{0,2s\},\max\{4-2s,2-s\}\bigr\}
\]
exhibits \(F_\mu\) as an Isaacs operator.  Namely, if
\[
\begin{array}{c|cc}
 &j=1&j=2\\ \hline
 i=1
 &\mathcal L_{11}(M)=\operatorname{tr}M
 &\mathcal L_{12}(M)=\operatorname{tr}M+2\mu\ell(M)\\[1mm]
 i=2
 &\mathcal L_{21}(M)=\operatorname{tr}M+4\mu-2\mu\ell(M)
 &\mathcal L_{22}(M)=\operatorname{tr}M+2\mu-\mu\ell(M),
\end{array}
\]
then
\[
 F_\mu(M)=\min_{i=1,2}\max_{j=1,2}\mathcal L_{ij}(M).
\]
As \(\ell(M)\) crosses the levels \(0,1,2\), the four affine states become
active successively, and none is redundant.

Consider the two-dimensional affine plane
\[
 M(x,y)\coloneqq\frac{x}{n}I+\frac{y}{2}B.
\]
Since \(\operatorname{tr}B=0\) and \(\operatorname{tr}(B^2)=2\),
\[
 \operatorname{tr}M(x,y)=x,
 \qquad
 \ell(M(x,y))=y.
\]
The zero set of \(F_\mu\) therefore cuts this plane along the polygonal
graph
\begin{equation}
 x=-\mu\psi(y).
 \label{eq:four-state-zero-graph}
\end{equation}

This graph rules out the Cabr\'e--Caffarelli structure even at the level of
the equation \(F=0\).  Indeed, suppose that an operator
\(G=\min\{C,V\}\), with \(C\) concave and \(V\) convex and both uniformly
elliptic, had the same zero set.  Uniform ellipticity in the \(x\)-direction
gives unique functions \(c,v:\mathbb R\to\mathbb R\) such that
\[
 C(M(c(y),y))=0,
 \qquad
 V(M(v(y),y))=0.
\]
The superlevel sets of \(C\) show that \(c\) is convex, while the sublevel
sets of \(V\) show that \(v\) is concave.  The zero graph of \(G\) is
therefore
\[
 x=\max\{c(y),v(y)\}.
\]
In view of \eqref{eq:four-state-zero-graph}, this would give
\[
 \psi=\min\{h,k\},
 \qquad
 h\coloneqq-\mu^{-1}c\ \text{concave},
 \qquad
 k\coloneqq-\mu^{-1}v\ \text{convex}.
\]
The set on which the convex branch \(k\) is active is an interval, since
\(h-k\) is concave.  On the interior of that interval the minimum agrees
with \(k\), so the slope jumps are upward; outside it the minimum agrees with
\(h\), so the slope jumps are downward.  At either endpoint, concavity of
\(h-k\) shows that the switch between the branches is also a downward jump.
Thus the slope jumps must occur in three consecutive blocks: downward,
upward, and downward, with any block possibly empty.  The actual pattern of
\(\psi\) is upward, downward, upward. This excludes the structural class
treated in \cite{CabreCaffarelli2003}.

The dual representation is impossible as well.  It would lead to
\(\psi=\max\{h,k\}\), with \(h\) concave and \(k\) convex.  A switch between
the two branches of such a maximum produces an upward, not a downward,
corner.  Hence the downward corner of \(\psi\) at \(y=1\) must be realized
by the concave branch on both sides.  Thus \(h(1)=2\), and the adjacent
slopes \(2\) and \(-2\) provide the supporting
lines
\[
 h(y)\le2y\quad(y<1),
 \qquad
 h(y)\le4-2y\quad(y>1).
\]
It follows that \(h<\psi\) on both tails, so the same convex branch \(k\)
must agree with \(\psi\) there.  Its left and right asymptotic slopes would
then be \(0\) and \(-1\), contradicting the monotonicity of the slopes of a
convex function.

There is a second obstruction.  The graph
\eqref{eq:four-state-zero-graph} has genuine corners.  If the same zero
equation were represented by a \(C^1\) uniformly elliptic operator
\(\widetilde F\), with ellipticity constants
\(\widetilde\lambda\) and \(\widetilde\Lambda\), then
\[
 \widetilde f(x,y)\coloneqq\widetilde F(M(x,y))
\]
would satisfy
\[
 \widetilde\lambda
 \le\partial_x\widetilde f(x,y)
 \le\widetilde\Lambda.
\]
The implicit function theorem would make its zero graph \(C^1\), a
contradiction.  Thus the differentiability assumption in the
Armstrong--Silvestre--Smart criterion
\cite[Theorem~1]{ArmstrongSilvestreSmart2012} cannot be recovered by changing the
representative of the equation.

\paragraph{The gain from centered tangents.}
The uncentered asymptotic profiles are much simpler than the original
operator.  Since
\[
 (T_tF_\mu)(M)
 =\operatorname{tr}M+\mu t\,
   \psi\!\left(\frac{\ell(M)}t\right),
\]
one has, locally uniformly in \(M\),
\[
 T_tF_\mu(M)\longrightarrow
 \operatorname{tr}M-\mu\bigl(\ell(M)\bigr)_+
 \qquad\text{as }t\downarrow0,
\]
and
\[
 T_tF_\mu(M)\longrightarrow
 \operatorname{tr}M+2\mu\bigl(\ell(M)\bigr)_+
 \qquad\text{as }t\uparrow\infty.
\]
Only the first limit is needed for the standard asymptotic estimates.  Its
concavity places \(F_\mu\) within the scope of 
\cite[Corollary~2]{SilvestreTeixeira2015}. By the
Evans--Krylov theorem, its equation has interior $C^{1,1}$ estimates and
therefore also meets the hypothesis of
\cite[Theorem~1.1]{PimentelTeixeira2016}. Consequently,
\[
 u\in C^{1,\alpha}_{\mathrm{loc}}\quad\text{for every }\alpha<1,
 \qquad
 u\in W^{2,p}_{\mathrm{loc}}\quad\text{for every }p>n.
\]
In particular, the standard differentiability theorem applied to
\(Du\in W^{1,p}_{\mathrm{loc}}\), with \(p>n\), shows that solutions have a
quadratic Peano expansion almost everywhere.  These estimates, however,
provide no continuity of the Hessian and do not identify an open regular
set.  The convex limit as \(t\uparrow\infty\) is a separate
favorable feature of the example; it is not used in these conclusions.

The centered tangents retain the missing information.  Away from the three
switching levels they are linear.  At the levels \(0\) and \(2\), the slope
increases and the tangent is convex; at the level \(1\), the slope decreases
and the tangent is concave.  Thus every centered tangent is covered by
Evans--Krylov, although the original four-state equation is not.
\Cref{cor:finite-switching} gives
\[
 u\in C^{2,\beta}_{\mathrm{loc}}(\Reg(u))
 \qquad(0<\beta<\alpha_{\mathrm{EK}}),
\]
and the complement of \(\Reg(u)\) has universal positive codimension.

For a concrete three-dimensional example, take \(\mu=2/5\). The equation is
\begin{equation}
 \Delta u+\frac25\psi(u_{11}-u_{22})=0.
 \label{eq:three-dimensional-isaacs}
\end{equation}
Its ellipticity constants are \(1/5\) and \(9/5\).  Thus the asymptotic
theory gives nearly Lipschitz gradients and arbitrarily high Hessian
integrability, while the centered tangent mechanism yields partial H\"older
continuity of the Hessian.

\section*{Acknowledgments}
A.S. acknowledges support
from King Abdullah University of Science and Technology (KAUST) under Award No. ORFS-
CRG12-2024-6430. E.V.T. gratefully acknowledges support from the Grayce B. Kerr Chair funds
at Oklahoma State University.

\section*{Declarations}

\subsection*{Data availability statement} No data has been produced in the elaboration of this work.

\end{document}